\documentclass[a4paper, oneside, 11pt, reqno]{amsart}
\usepackage[utf8]{inputenc}
\usepackage{mathtools}
\usepackage{amsfonts}
\usepackage{amsmath}
\usepackage{amssymb}
\usepackage{amsthm}
\usepackage[bottom=3cm]{geometry}
\usepackage{mathrsfs} 
\usepackage[dvipsnames]{xcolor}
\usepackage{dsfont}
\usepackage{nicefrac}
\allowdisplaybreaks
\usepackage{tikz}
\usepackage{caption} 

\usepackage[pagebackref=true,colorlinks=true, linkcolor=blue, citecolor=blue]{hyperref}
\renewcommand*\backref[1]{\ifx#1\relax \else (Cited on #1) \fi}
\usepackage[nameinlink]{cleveref} 
\usepackage{orcidlink}

\usepackage{appendix}
\usepackage{enumerate}
\usepackage{tikz-cd} 

\theoremstyle{plain}
\newtheorem{theorem}{Theorem}
\newtheorem{proposition}[theorem]{Proposition}
\newtheorem{lemma}[theorem]{Lemma}

\newtheorem{definition}[theorem]{Definition}
\newtheorem{remark}[theorem]{Remark}

\newtheorem{example}[theorem]{Example}

\numberwithin{theorem}{section}
\numberwithin{equation}{section} 

\newcommand{\vertiii}[1]{{\left\vert\kern-0.25ex\left\vert\kern-0.25ex\left\vert #1 
    \right\vert\kern-0.25ex\right\vert\kern-0.25ex\right\vert}}

\newcommand{\Z}{\mathbb{Z}}

\newcommand{\R}{\mathbb{R}}
\newcommand{\N}{\mathbb{N}}
\newcommand{\X}{\mathbb{X}}
\newcommand{\Y}{\mathbb{Y}}

\newcommand{\calT}{\mathcal{T}}

\newcommand{\bbE}{\mathbb{E}}
\newcommand{\bbP}{\mathbb{P}}

\newcommand{\eps}{\varepsilon}
\newcommand{\ud}{u'}

\title{Convex order and faster transmission in first contact percolation}

\author{Benedikt Jahnel  \orcidlink{0000-0002-4212-0065} }
\address{Technische Universit\"at Braunschweig \& Weierstrass Institute Berlin}
\email{benedikt.jahnel@tu-braunschweig.de}
\author{Jonas K\"oppl  \orcidlink{0000-0001-9188-1883}}
\address{Technische Universit\"at Braunschweig}
\email{jonas.koeppl@tu-braunschweig.de}
\author{Lukas L\"uchtrath  \orcidlink{0000-0003-4969-806X}}
\address{Weierstrass Institute Berlin}
\email{lukas.luechtrath@wias-berlin.de}
\author{Anh Duc Vu \orcidlink{0009-0005-6913-4992}}
\address{Weierstrass Institute Berlin}
\email{anhduc.vu@wias-berlin.de}

\date{\today}
\keywords{Point processes, generalized graphical construction, growth processes, subadditive ergodic processes, strict speed-up}
\subjclass{Primary 60K35; Secondary 60G55} 

\begin{document}
\begin{abstract}
Inspired by strict-monotonicity criteria for the time constant in first passage percolation, we investigate convex ordering of point processes in relation to the time constant in first contact percolation. In a nutshell, first contact percolation models the spread of an infection as a contact process without recovery based on a generalized graphical representation, where the usual homogeneous Poisson point processes on the edges are replaced by general simple point processes. Based on a notion of convex ordering for point processes, we prove monotonicity in the number and existence of infection paths. We argue that this convex ordering is however not enough to ensure strict monotonicities in the asymptotic speed of the infection. Instead, we propose a criterion based on an ordering of void probabilities and prove a speed-up for one-dimensional systems based on $\Z$-stationary point processes.
\end{abstract}

\maketitle

\section{Introduction}
{\em First passage percolation} (FPP) is a classical pure-growth model where information is transmitted along the edges of a graph according to i.i.d.\ transmission times~\cite{auffinger201750,kesten2006aspects}. Under suitable conditions on the transmission-time distribution and the underlying graph, it is known that the system satisfies a shape theorem, that roughly says the following: Starting from a source node, the set of reachable vertices up to time $t\ge 0$, divided by $t$, converges almost surely to a non-trivial limiting set. In particular, information transport has an asymptotically linear speed, where the associated time constant depends on the spatial direction in which one tracks the information. 

\medskip
Considering two different transmission-time distributions, it is clear that if they are stochastically ordered, by a monotone coupling, also the associated time constants in FPP must be ordered. What is more challenging is to find general conditions on the transmission-time distributions such that they lead to ordered time constants. A first answer to this question is given in~\cite{van1993inequalities} in case of FPP on the hypercubic lattice with the help of the notion of convex ordering. Two probability measures $\mu,\nu$ on $\R_{\ge0}$ are convexly ordered if
\begin{align*}
    \nu(f)\le \mu(f),\qquad\text{ for all }f\text{ convex and decreasing}. 
\end{align*}
In this case, $\mu$ is {\em more variable} compared to $\nu$ and for example the first moments of the distributions are automatically ordered. In~\cite{van1993inequalities} it is proven that convex ordering already implies the ordering of the time constants and, under an additional usefulness criterion, even implies a strict ordering of the time constants, at least in dimensions $d\ge2$. Some additional requirements on the measures could be lifted in the planar case in~\cite{marchand2002strict}. 

\medskip
An alternative to FPP has been proposed in~\cite{jahnel2026first} based on the idea that messages are exchanged between neighboring vertices of a graph at random meeting times. More precisely, this model of {\em first contact percolation} (FCP) can be seen as a generalized graphical representation, where the i.i.d.\ Poisson point processes, as used in the Richardson model of FPP, are replaced by general random closed sets. Similar to the contact process without recovery, an infection now travels along increasing sequences of meeting times starting in a source node. If the random closed sets have an $\R$-stationary distribution, then associated time constants can be recovered by an appropriate mapping to FPP. This is however no longer possible if the distribution of the random closed sets is for example only $\Z$-stationary, which may be suitable to model time-periodic behaviour as often observed in nature. In this case,~\cite{jahnel2026first} provides some case studies for models featuring varying degrees of reliability and their associated time constants and shape theorems. 

\medskip
Inspired by the existing criteria for inequalities of time constants in FPP mentioned above, in this work, we study related, but also different, criteria for the comparison of time constants in FCP. 
The manuscript is organized as follows. We present our setting, main results and examples in Section~\ref{sec_set}. In Section~\ref{sec_proofs}, we exhibit all proofs.

\section{Setting, main results and examples}\label{sec_set}
Consider the hypercubic lattice $\Z^d$ with $d\ge 1$ and nearest-neighbor edges. We equip each edge $e=\{x,y\}$ independently with a random closed set $\X_e$ on $\R$ that represents meeting times of the pair of end vertices $x$ and $y$; see~\cite{molchanov2005theory} for details on random closed sets. We assume that the $\X_e$ follow a common distribution represented by the distribution of $\X$.
Assume that at time $t=0$, an infection is present at the origin $o\in \Z^d$. We are interested in the distribution of {\em infected} vertices at time $t\ge 0$ given by 
$$
I(t):=\{y\in \Z^d\colon \exists \text{ permitted path }\Gamma_t(o,y)\},
$$
where a space-time path
$$
\Gamma_t(x,y)=\big((e_0,t_o),\dots, (e_n,t_n)   \big)
$$
is called {\em permitted} if $e_i=\{x_{i},x_{i+1}\}$ for a nearest-neighbor self-avoiding path $x=x_0,\dots,x_{n+1}=y$  connecting $y$ to $x\in\Z^d$ and 
$0\le t_0\le\dots \le t_n< t$ such that $t_i\in \X_{\{x_{i},x_{i+1}\}}$ for all $0\le i\le n$.
In words, the infected set is the set of vertices that can be reached from the origin via an increasing sequence of meeting times of nearest-neighbor vertices, see Figure~\ref{fig_1} for an illustration based on a simple point process $\X$. This is {\em first contact percolation} (FCP) as introduced in~\cite{jahnel2026first}. 
\begin{center}
\begin{figure}[ht]
\begin{tikzpicture}[scale=1.0]
  \foreach \i in {0,...,10} {
    \draw[thick] (\i,0) -- (\i,6);
    \node[below] at (\i,0) {$\i$};
  }
  \draw[->] (-0.8,0) -- (-0.8,6) node[left] {time $t$};


  \foreach \t in {0.8,2.3,4.5} {
    \fill[cyan]     (0.5,\t) circle (1.5pt);
  }
  \foreach \t in {0.5,1.7,3.9} {
    \fill[cyan]      (1.5,\t) circle (1.5pt);
  }
  \foreach \t in {0.3,2.6,4.2} {
    \fill[cyan] (2.5,\t) circle (1.5pt);
  }
  \foreach \t in {1.2,2.9,4.8} {
    \fill[cyan]   (3.5,\t) circle (1.5pt);
  }
  \foreach \t in {0.7,1.5,3.3} {
    \fill[cyan]   (4.5,\t) circle (1.5pt);
  }
  \foreach \t in {0.6,2.0,4.1} {
    \fill[cyan]     (5.5,\t) circle (1.5pt);
  }
  \foreach \t in {1.1,2.7,5.0} {
    \fill[cyan]    (6.5,\t) circle (1.5pt);
  }
  \foreach \t in {0.9,3.4,4.6} {
    \fill[cyan]  (7.5,\t) circle (1.5pt);
  }
  \foreach \t in {1.0,2.2,4.9} {
    \fill[cyan]    (8.5,\t) circle (1.5pt);
  }
  \foreach \t in {0.4,2.5,3.8} {
    \fill[cyan]     (9.5,\t) circle (1.5pt);
  }

  \draw[->,ultra thick,blue]
    (0,0) -- (0,0.8)   
          -- (1,0.8)   
          -- (1,1.7)   
          -- (2,1.7)   
          -- (2,2.6)   
          -- (3,2.6)   
          -- (3,2.9)   
          -- (4,2.9)   
          -- (4,3.3)   
          -- (5,3.3)   
          -- (5,4.1)   
          -- (6,4.1);  

\node[blue,right] at (6,4.1) {\hspace{0.1cm}\bf infection path};
\draw[->] (-1,0) -- (10.5,0) node[right] {$\Z$};
\end{tikzpicture}
\caption{
  A realization of first contact percolation on the integer line $\mathbb{Z}$.
  Point processes mark potential meeting times on each edge, and the blue arrow traces one sample infection path through the network.
}
\label{fig_1}
\end{figure}
\end{center}

\subsection{Results with convex ordering}
We are interested in criteria that allow us to compare FCP based on different {\em simple point processes} $\X$ and $\X'$. Inspired by the convex-ordering criterion for FPP, as mentioned in the introduction, we present here an associated notion, see~\cite{blaszczyszyn2011clustering,blaszczyszyn2009directionally,meester1993regularity}. We write $\X(A)$ for the number of points of $\X$ in $A$.
\begin{definition}[Convex ordering of point processes]
    Given two simple point processes $\X,\X'$ on $\R$, we say that $\X$ is more {\em reliable} than $\X'$ if for every finite collection of Borel sets $B_1,\dots,B_k\in\mathcal{B}(\R)$ and every decreasing convex function $\varphi\colon\R^k\to\R$, we have
    \begin{align}\label{def:convexordering}
    \bbE[\varphi(\X(B_1),\dots,\X(B_k))] \leq \bbE[\varphi(\X'(B_1),\dots,\X'(B_k))].
    \end{align}
    In this case, we write $\X\gg\X'$.
\end{definition}
The notion {\em decreasing} is used with respect to the usual partial order on $\R^k$. 
We comment on examples of comparable models further below. 
\begin{remark}\label{rem:conv}
Let us make the following remarks.
\begin{enumerate}[(i)]
    \item The notion of convex ordering can be introduced even for random Radon measures on $\R^d$ replacing the point processes on $\R$. It is based on a multivariate version of the convex ordering as mentioned in the context of FPP, see for example~\cite{shaked2007stochastic}. 
    \item By~\cite[Proposition~3.1]{blaszczyszyn2009directionally}, it suffices to check~\eqref{def:convexordering} for mutually disjoint and bounded Borel sets.
    \item As noted in~\cite[Proposition~3.3]{blaszczyszyn2011clustering}, convex ordering implies that void probabilities are ordered in the sense that if $\X\gg\X'$, then for all bounded $B\in\mathcal{B}(\R)$,
\begin{align}\label{void}
    \bbP(\X(B)=0)\le \bbP(\X'(B)=0).
\end{align} 
\end{enumerate}

\end{remark}
With this criterion at hand, let us state our first result on the number of permitted paths, 
    \begin{align*}
N^\X_t (x,y):=\#\{\text{permitted paths }\Gamma_t(x,y)\}.
    \end{align*}
\begin{theorem}[Monotonicities in permitted paths]\label{thm:perm}
    Let $\X,\X'$ be simple point processes with $\X\gg\X'$. For all $x,y\in \Z^ d$ and $t\ge0$, we have
    $$\bbE[N^\X_t (x,y)]\geq \bbE[N^{\X'}_t (x,y)]\quad \text{ and }\quad \bbP(N^\X_t (x,y)\ge 1)
    \geq \bbP(N^{\X'}_t (x,y)\ge 1).$$
\end{theorem}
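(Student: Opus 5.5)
The plan is to express both quantities as functionals of the independent processes $(\X_e)_e$ and to swap $\X_e$ for $\X'_e$ one edge at a time. The first inequality needs only the ordering of intensity measures. The second needs only the ordering of void probabilities from Remark~\ref{rem:conv}(iii).

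\emph{Expected number of paths.} Since $\varphi(z)=-z$ is decreasing and convex, \eqref{def:convexordering} gives $\bbE[\X(B)]\ge\bbE[\X'(B)]$ for all Borel $B$. Hence the intensity measures satisfy $\Lambda\ge\Lambda'$ as measures. Now fix a self-avoiding path $x=x_0,\dots,x_{n+1}=y$. Its edges are distinct, so the processes on them are independent. The number of admissible time sequences along this path is
\[
\sum_{s_0\in\X_{e_0}}\cdots\sum_{s_n\in\X_{e_n}}\mathds{1}\{0\le s_0\le\dots\le s_n<t\}.
\]
Campbell's formula for independent processes gives its expectation as
\[
\int\cdots\int \mathds{1}\{0\le s_0\le\dots\le s_n<t\}\,\Lambda(ds_0)\cdots\Lambda(ds_n).
\]
The integrand is nonnegative. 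Replacing $\Lambda$ by $\Lambda'$ one coordinate at a time, using Tonelli, can therefore only decrease this integral. Summing over the countably many self-avoiding paths from $x$ to $y$ gives $\bbE[N^\X_t(x,y)]\ge\bbE[N^{\X'}_t(x,y)]$.

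\emph{Existence probability: reduction to finitely many edges.} First I would note that permitted paths may be replaced by arbitrary nearest-neighbour walks with nondecreasing times. Erasing a loop between two visits to the same vertex keeps the remaining times nondecreasing, so a permitted walk exists if and only if a permitted self-avoiding path exists. Next, restrict to edges in a box $\Lambda_m$. The events that a permitted path inside $\Lambda_m$ exists increase to $\{N_t(x,y)\ge1\}$ as $m\to\infty$, so by monotone convergence it suffices to prove the inequality for the box. For the box there are finitely many edges, and I would swap $\X_e\to\X'_e$ one edge at a time. This needs a comparison of the two laws when they differ in a single edge $e$, and the swap changes only the law of that edge, since edges are independent.

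\emph{Existence probability: the single-edge step.} This is the main step. Fix $e=\{u,v\}$ and condition on all other edges. If a permitted walk avoiding $e$ exists, the event occurs regardless of $\X_e$. Otherwise, consider a walk that uses $e$ once, in the direction $u\to v$ at time $s$. This requires $s\ge a_u$, where $a_u$ is the earliest infection time of $u$ from $x$ using other edges. It also requires $v$ to reach $y$ before $t$ starting at time $s$ via other edges; this holds exactly for $s$ in an initial interval ending at some $b_v$, because reachability is monotone in the starting time. Walks using $e$ several times reduce to a single use by loop erasure. Hence, conditionally, the event ``no permitted path'' equals $\{\X_e(B)=0\}$ with
\[
B=\big([a_u,b_v]\cup[a_v,b_u]\big)\cap[0,t).
\]
Here the endpoints may be open or closed. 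The set $B$ is a bounded random Borel set, measurable with respect to the other edges and hence independent of $\X_e$. The void-probability ordering \eqref{void} applied conditionally gives $\bbP(\X_e(B)=0\mid\cdot)\le\bbP(\X'_e(B)=0\mid\cdot)$. Integrating over the other edges completes the swap and hence the proof.

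The obstacle I expect is making this representation $\{\X_e(B)=0\}$ rigorous. One must handle the endpoint conventions in $t_0\le\dots\le t_n<t$, and one must check that the loop-erasure argument applies within the box so that the conditional event really depends on $\X_e$ only through $\X_e(B)$.
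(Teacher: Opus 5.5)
Your proof is correct, but it takes a different route from the paper. The paper discretizes $[0,t)$ into cells of length $t/n$ and writes the number of permitted paths along a fixed spatial path as a limit of sums of $\min_j\X_{e_j}(B_j)$ over increasing cell sequences. It observes that this expression is an increasing concave function of the cell counts, and so is its truncation $\min\{\cdot,1\}$ after summing over paths. It then transfers the convex order to the independent edge processes via Lemma~\ref{lem:coniid} and passes to the limit by dominated convergence.

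You instead use only two weak consequences of $\X\gg\X'$. The first is domination of intensity measures (from $\varphi(z)=-z$). Combined with Campbell's formula on the distinct, hence independent, edges of a self-avoiding path and Tonelli, it gives the first inequality. The second is void-probability domination (immediate from $\varphi(z)=(1-z)^+$). After loop erasure, restriction to a box and edge-by-edge interpolation, it gives the second inequality, since conditionally on the other edges the event is either sure or equal to $\{\X_e(B)\ge 1\}$ with $B$ independent of $\X_e$. Your argument is more elementary and needs less: there is no time discretization, and no finite intensity is required. The paper's dominated-convergence step uses $\bbE[\X([0,t])]<\infty$, which is not among the hypotheses.

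Your second argument can even be sharpened. Let $P_w$ be the (upward closed) set of times by which $w$ is reachable from $x$ avoiding $e$. Let $Q_w$ be the (downward closed) set of times from which $y$ is reachable from $w$ before $t$ avoiding $e$. Then $B=(P_u\cap Q_v)\cup(P_v\cap Q_u)$. In the nontrivial case $P_u\cap Q_u=P_v\cap Q_v=\emptyset$, which forces $Q_u$ to lie below $P_u$ and $Q_v$ below $P_v$. So the two pieces of $B$ cannot both be nonempty, and $B$ is an interval. By inner approximation with closed intervals, the second inequality therefore already holds under the hitting-probability domination \eqref{eq:voiddom} alone.

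What the paper's route buys in return is uniformity. Under its integrability assumption, the same concave-functional argument works for any increasing concave test function of the path count, e.g.\ $\min\{\cdot,k\}$ for every $k$. This gives $\bbE[\min\{N^\X_t(x,y),k\}]\ge\bbE[\min\{N^{\X'}_t(x,y),k\}]$, which your two tailored arguments do not provide directly.
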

The proof and all other proofs are postponed to Section~\ref{sec_proofs}.

\medskip
Let us come back to the setting where $\X$ and $\X'$ are given as random closed sets.
For $v\in \R^d$, denote by
\[T_\X(v):=\inf\{t\ge 0\colon N_t^\X(o,[v])\ge 1\},\]
with $[v]=(\lfloor v_1\rfloor,\dots, \lfloor v_d\rfloor)$, the first hitting time of $v$.
If the almost-sure limits
\begin{align}\label{def:timeconstant}
\lim_{t\uparrow\infty}t^{-1}T_\X(ty)=c_\X(y)\quad\text{ and }\quad\lim_{t\uparrow\infty}t^{-1}T_{\X'}(ty)=c_{\X'}(y)
\end{align}
exist, then, under the conditions in and with the help of Theorem~\ref{thm:perm}, we have that the associated {\em time constants} \(c_\X(y),c_{\X'}(y)\) satisfy $c_\X(y)\le c_{\X'}(y)$. Indeed, for all $\varepsilon>0$, we have that
$$
\bbP\big(T_\X(ty)\le (c_{\X}(y)-\varepsilon)t\big)\ge \bbP\big(T_{\X'}(ty)\le (c_{\X}(y)-\varepsilon)t\big),
$$
where the left-hand side tends to zero. This implies $c_\X(y)\le c_{\X'}(y)$. 

\medskip
This is an indication that convex ordering is a strong condition in the setting of point processes. In fact at least under $\R$-stationarity, we see that convex ordering on the level of point processes in FCP implies {\em stochastic ordering} on the level of transition-time distributions in FPP. In particular, the transition-time distributions are {\em convexly ordered}. This is due to the following identification, as already highlighted in~\cite{jahnel2026first}. In case that $\X,\X'$ are $\R$-stationary, the time constants in FCP can be derived from the corresponding time constants in FPP with transition times distributed via
$$
\mu([0,s])=\bbP(\X([0,s])>0),\qquad s\ge 0,
$$
where the right-hand side is the size-biased hitting probability. But these hitting probabilities are equivalent to void probabilities as mentioned in Remark~\ref{rem:conv}. In particular,~\eqref{void} implies that the void probabilities of $\X,\X'$ can be coupled, and in turn also the transition times in FPP, and thus the ordering of the time constants is trivially achieved.

\medskip
On the other hand, if $\X$ is an unit-intensity Poisson point process and $\Y$ the inhomogeneous Poisson point process with intensity measure $(1+|1+x|^{-1})\mathrm{d}x$, then the associated FCP models on $\Z_{\ge 0}$ both have asymptotic speed $1$. However, $\X\ll\Y$ and in particular their void probabilities are even strictly ordered. This tells us that strict convex ordering alone is not suitable to give criteria for strict inequalities for time constants. 

\medskip
On top of these shortcomings, since convex ordering is a somewhat global property, many natural periodic point-process models are not comparable, as we discuss now in view of some examples already investigated in~\cite{jahnel2026first}. 
Recall that $\X$ is a simple point process on $\R$ and consider the following examples.
\begin{itemize}
    \item $\X={\rm L}$, if $\X$ is the fixed {\em lattice} $\Z$. 
    \item $\X={\rm SL}$, if $\X=\Z+U$ where $U$ is a single uniform random variable on $[0,1)$, in words,  $\X$ is the {\em stationarized lattice}. 
    \item $\X={\rm PL}$, if $\X=\bigcup_{z\in\Z}\{z+U_z\}$ where $(U_z)_{z\in \Z}$ is an i.i.d.\ family of uniform random variables on $[0,1)$, in words, $\X$ is a {\em perturbed lattice}.
    \item $\X={\rm SPL}$, if $\X=\bigcup_{z\in\Z}\{z+U_z\}+U'$ where $(U_z)_{z\in \Z},U'$ is an i.i.d.\ family of uniform random variables on $[0,1)$, in words, $\X$ is a {\em stationarized perturbed lattice}.
    \item $\X={\rm Poi}$, if $\X$ is a {\em Poisson point process} with intensity $1$.
\end{itemize}
With respect to convex ordering we can provide the following analysis.
\begin{lemma}\label{lem:ex}
The models mentioned above have the following relations with respect to convex ordering as in Definition~\ref{def:convexordering}.
    \begin{enumerate}
        \item {\rm L}, {\rm SL}, {\rm PL}, and {\rm SPL} are mutually non-comparable in terms of convex ordering.
        \item {\rm L}, {\rm SL}, {\rm Poi} are mutually non-comparable in terms of convex ordering.
        \item ${\rm PL\gg Poi}$ and ${\rm SPL\gg Poi}$.
    \end{enumerate}
\end{lemma}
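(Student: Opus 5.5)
The plan is to prove non-comparability using two necessary conditions for $\X\gg\X'$ that follow directly from the definition. First, taking the decreasing, convex (linear) function $\varphi(x)=-x$ gives the ordering of mean measures
\[
\bbE[\X(B)]\ge\bbE[\X'(B)]\quad\text{for all bounded } B.
\]
Second, Remark~\ref{rem:conv}(iii) gives the ordering of void probabilities~\eqref{void}. To show that two models are incomparable, it suffices to exhibit two bounded sets on which one of these inequalities points in opposite directions.

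For ${\rm L}$ against any of ${\rm SL},{\rm PL},{\rm SPL},{\rm Poi}$, I use means. The mean measure of ${\rm L}$ is $\sum_{z}\delta_z$, while all other models have Lebesgue mean measure. Taking $B=\{0\}$ and $B=(0,1)$ then gives opposite inequalities.

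For the remaining pairs I use void probabilities.
\begin{itemize}
\item ${\rm SL}$ versus ${\rm PL}$ and ${\rm SPL}$. Any interval of length $1$, for instance $[1/2,3/2)$, is hit by ${\rm SL}$ almost surely. It is void for ${\rm PL}$ with probability $\bbP(U_0<1/2,\,U_1\ge1/2)=1/4>0$, and for ${\rm SPL}$ with positive probability. In the other direction, take $B_n$ to be a union of $n$ intervals $[jk,jk+1/2)$, $j=1,\dots,n$, with $k$ large. The void probability of ${\rm SL}$ on $B_n$ is $1/2$ for every $n$. For ${\rm PL}$ it is $(1/2)^n$ by independence. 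For ${\rm SPL}$, I condition on $U'$: the conditional void probability is $p(U')^n$ with $p<1$ almost surely, and dominated convergence gives decay to $0$.
\item ${\rm SL}$ versus ${\rm Poi}$. The interval $[0,1)$ is hit surely by ${\rm SL}$ but void with probability $e^{-1}$ for ${\rm Poi}$. Conversely, $B_n$ has void probability $e^{-n/2}\to0$ under ${\rm Poi}$, while it stays $1/2$ under ${\rm SL}$.
\item ${\rm PL}$ versus ${\rm SPL}$. The set $[0,1)$ always contains the point $U_0$ of ${\rm PL}$ but is void with positive probability for ${\rm SPL}$. For $B=[1/2,3/2)$, the void probability of ${\rm PL}$ is $1/4$. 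For ${\rm SPL}$, conditioning on $U'=u<1/2$, only the points from $z=0$ and $z=1$ can land in $B$, and the conditional void probability is $(1/2-u)(1/2+u)=1/4-u^2$. By symmetry the same bound holds for $u>1/2$, so averaging gives a value strictly below $1/4$.
\end{itemize}
These cases give parts (1) and (2).

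For part (3), I would first realize ${\rm Poi}$ as $\bigcup_z\{z+V_{z,j}\colon j\le K_z\}$, with $K_z$ i.i.d.\ ${\rm Poisson}(1)$ and $V_{z,j}$ i.i.d.\ uniform on $[0,1)$. Fix disjoint bounded sets $B_1,\dots,B_k$, which suffices by Remark~\ref{rem:conv}(ii). Then ${\rm PL}(B_\cdot)=\sum_z\xi_z$ and ${\rm Poi}(B_\cdot)=\sum_z\eta_z$, where:
\begin{itemize}
\item $\xi_z\in\{0,e_1,\dots,e_k\}$ is the indicator vector of the cell containing $z+U_z$;
\item $\eta_z$ has independent ${\rm Poisson}(p_{z,i})$ coordinates, with $p_{z,i}=|B_i\cap[z,z+1)|=\bbP(\xi_z=e_i)$.
\end{itemize}
Since the decreasing-convex (dcx) order is preserved under sums of independent vectors, it suffices to show $\bbE\varphi(\xi_z)\le\bbE\varphi(\eta_z)$ for a single $z$.

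This single-site comparison is the main obstacle. My first attempt is a replication argument. Let $\xi^{(1)},\dots,\xi^{(n)}$ be i.i.d.\ copies of $\xi_z$, and consider a multinomial vector with $n$ trials and cell probabilities $p_{z,i}/n$. This vector is an average-type refinement of $\xi_z$ with the same mean, so it is larger in convex order. Letting $n\to\infty$ gives the Poisson limit $\eta_z$, and the convex order passes to the limit by uniform integrability. If this argument becomes messy, I would instead invoke the sub-Poisson results for perturbed lattices in~\cite{blaszczyszyn2009directionally}.

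For ${\rm SPL}$, I condition on $U'$. Given $U'$, the process is a shifted perturbed lattice, and the argument above applies with shifted cells. Since ${\rm Poi}$ is stationary, its law is unaffected by the shift. Integrating the conditional inequality over $U'$ then yields ${\rm SPL}\gg{\rm Poi}$.
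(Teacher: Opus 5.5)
\textbf{Parts (1) and (2).} These are correct, by a different route from the paper. The paper separates all pairs with the single concave test function $\sqrt{x}$ on the four sets $\{0\},(0,1),(-1/2,1/2)\setminus\{0\},(-1/2,0)\cup(1/2,1)$. You instead use two necessary conditions for $\gg$: ordered mean measures, via $\varphi(x)=-x$, and ordered void probabilities. You then find explicit sets on which these conditions point in opposite directions. I checked your computations and they are right:
\begin{itemize}
\item the void probability $1/4$ for ${\rm PL}$ on $[1/2,3/2)$;
\item $(1/2)^n$ versus $1/2$ on $B_n$;
\item the conditional void probability $1/4-u^2$ for ${\rm SPL}$, with the symmetry $u\mapsto 1-u$, which averages to $1/6<1/4$.
\end{itemize}
Your route is longer, but every step is transparent. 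It also only uses univariate test functions, so it is insensitive to which multivariate order is meant.

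\textbf{Part (3): the single-site step fails.} The gap is exactly the single-site comparison you flagged. Your claim that the multinomial vector with $n$ trials and cell probabilities $p_{z,i}/n$ dominates $\xi_z$ is false for $k\ge 2$, and so is the comparison of $\xi_z$ with $\eta_z$. Take $z=0$, $B_1=[0,1/2)$, $B_2=[1/2,1)$, and the decreasing convex function $\varphi(x_1,x_2)=-\min\{x_1,x_2\}$.
\begin{itemize}
\item Since $\xi_0\in\{e_1,e_2\}$, we get $\bbE\varphi(\xi_0)=0$.
\item The multinomial vector $M_2$ with two trials and cell probabilities $1/4,1/4$ gives $\bbE\varphi(M_2)=-1/8$.
\item For $\eta_0$, $\bbE\varphi(\eta_0)\le-(1-{\rm e}^{-1/2})^2<0$.
\end{itemize}
Here $\xi_0$ and $\eta_0$ are exactly the count vectors $({\rm PL}(B_1),{\rm PL}(B_2))$ and $({\rm Poi}(B_1),{\rm Poi}(B_2))$. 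So this is not just a failure of your technique. With the definition read literally, i.e.\ all decreasing convex test functions, ${\rm PL}\gg{\rm Poi}$ fails. The same $\varphi$ on two adjacent intervals of length $\epsilon$ also rules out ${\rm SPL}\gg{\rm Poi}$: the ${\rm SPL}$ side is $O(\epsilon^3)$, while the Poisson side is of order $\epsilon^2$. The univariate Hoeffding-type intuition breaks because a convex function can have a negative mixed second difference; here $\varphi(e_1+e_2)-\varphi(e_1)-\varphi(e_2)+\varphi(0)=-1$.

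\textbf{What the paper actually uses.} The paper's proof cites \cite[Proposition~6.1]{blaszczyszyn2011clustering}, as does your fallback. That result concerns the \emph{directionally} convex order (this is what ``dcx'' means there): test functions that are componentwise convex and supermodular, decreasing ones for $\gg$. Part (3) holds only in that reading. In that reading, your reduction to independent cells and your conditioning on $U'$ for ${\rm SPL}$ both go through.

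\textbf{The correct single-cell step.} It is a mixing argument, not a replication limit.
\begin{enumerate}
\item Conditionally on the number $N$ of points in the cell, the count vector is $S_N\sim\mathrm{Mult}(N;p_{z,1},\dots,p_{z,k})$. For ${\rm PL}$, $N\equiv1$; for ${\rm Poi}$, $N\sim{\rm Poisson}(1)$.
\item For directionally convex $f$, the map $n\mapsto\bbE f(S_n)$ is convex. Its second difference equals $\bbE[f(S_n+X+Y)-f(S_n+X)-f(S_n+Y)+f(S_n)]$, with independent single-trial vectors $X,Y\ge 0$, and this is nonnegative for such $f$.
\item Jensen's inequality with $\bbE N=1$ then gives $\bbE f(S_1)\le\bbE f(S_N)$.
\end{enumerate}
This is precisely the paper's remark that $N_z$ is more variable than the constant $1$.
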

In~\cite{jahnel2026first}, it is shown that time constants are strictly ordered for $d=1$ as 
$
c_{\rm L}<c_{\rm SL}<c_{\rm SPL}<c_{\rm Poi_1}
$
where, without loss of generality we set $c_{\cdot}=c_{\cdot}(1)$. 
On the other hand, in general dimensions, it is proved there that 
$
c_{\rm L}(y)\le c_{\rm SL}(y)\le c_{\rm PL}(y)\le c_{\rm Poi_1}(y)
$,
for all $y\in \R^d$,
despite the lack of convex ordering.
In this context, let us further mention that FCP models based on stationary Poisson point process can be mapped to Richardson-type FPP models. On the level of FPP, one can then use monotone couplings to give an alternative proof for the inequalities $c_{\rm SL}\le c_{\rm PL}\le c_{\rm Poi_1}$. 

Nevertheless, the references~\cite{blaszczyszyn2011clustering,blaszczyszyn2009directionally,meester1993regularity} provide a large class of models that are in fact comparable with respect to convex ordering, such as for example Cox point processes, determinantal, or permanental point processes etc..

\subsection{Strict speed-up via void criterion}
In order to derive a workable condition that ensures a strict speed-up, we limit our attention to FCP on $\Z_{\ge 0}$ with $\Z$-stationary $\X$ and $\X'$. In words, we consider a one-dimensional population with periodic meeting habits. 
We start by establishing existence and non-triviality of time constants, as defined in~\eqref{def:timeconstant}.

\begin{proposition}[Existence of time constants on $\Z_{\ge 0}$]\label{lem:Shape}
    Consider the FCP model on $\Z_{\geq0}$ based on a $\Z$-stationary random closed set $\X$. Then, there exists $c_\X\in [0,\infty]$ such that almost surely, 
$$
\lim_{t\uparrow\infty}t^{-1}T_\X(t)=c_\X.
$$
Further,    
    \begin{enumerate}
        \item if $\bbP(\X=\emptyset)>0$, then the FCP almost-surely spreads only to finitely many sites, in particular $c_\X=\infty$.
        \item FCP has a finite time constant $c_\X\leq M$ as long as
        \begin{equation}\label{eq:finite-waiting-time}
M:=\int_0^\infty\bbP(\X([0,s])=0)\mathrm{d} s
            < \infty.
        \end{equation}
        \item $c_\X=0$ if and only if $\bbP(x\in\X)=1$ for some $x\in\R$.
    \end{enumerate}
\end{proposition}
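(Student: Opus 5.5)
In one dimension the infection must cross the edges $\{k,k+1\}$, $k\ge0$, in order. Hence $T_\X(n)$ is obtained by the recursion $\tau_0=0$ and
\[
\tau_{k+1}=\inf\big(\X_{\{k,k+1\}}\cap[\tau_k,\infty)\big),
\]
so that $T_\X(n)=\tau_n$ (up to the convention at the endpoint; for random closed sets the infimum is attained). The plan is to apply Kingman's subadditive ergodic theorem to the family
\[
T_{m,n}:=\text{the first time of reaching } n \text{ when starting at site } m \text{ at time } \lceil \tau\text{-type start}\rceil.
\]
We cannot start at the real time $\tau_m$, because stationarity holds only under integer shifts. Instead, define $T_{m,n}$ as the passage time from $m$ to $n$ started at the integer time $\lceil T_{0,m}\rceil$, or more cleanly take a doubly indexed family $S_{m,n}(s)$ started at integer time $s$. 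The cleanest choice is
\[
X_{m,n}:=\lceil \text{passage time from site } m \text{ to site } n \text{ starting at time } 0\rceil .
\]
Under the joint shift of space by $m$ and time by an integer $j$, the law of the i.i.d.\ family $(\X_e)$ is invariant by $\Z$-stationarity. The subadditivity $X_{0,n}\le X_{0,m}+X_{m,n}\circ\theta_{m,X_{0,m}}$ involves a random time shift, which is handled as in first passage percolation by using the monotonicity of the passage time in the starting time together with integer rounding. Note that rounding up costs at most $1$ per concatenation, so it is better to avoid it. Instead, I would work with the sequence of integer parts $\lfloor\tau_k\rfloor$ and the Markov-chain structure: conditional on $\tau_k$, the next increment depends only on the fresh set $\X_{\{k,k+1\}}$ and on the fractional part of $\tau_k$.

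A simpler route is therefore the following. Set $R_k:=\tau_k-\lfloor \tau_k\rfloor\in[0,1)$. By independence of the edges and $\Z$-stationarity, $(R_k)$ is a Markov chain on $[0,1)$. The increments are $\tau_{k+1}-\tau_k=g(R_k,\X_{\{k,k+1\}})$, where $g(r,\X)=\inf(\X\cap[r,\infty))-r$; this is a Markov additive process. Existence of $\lim \tau_n/n$ almost surely then follows from a subadditive argument. Subadditivity in the form $\tau_{m+n}\le \tau_m+\lceil\tau'_n\rceil$ only gives limits up to errors, so I would apply Kingman to
\[
Y_{m,n}:=\text{passage time from } m \text{ to } n \text{ started at time } \lceil\tau_m\rceil\ \text{ shifted},
\]
or, equivalently, use Liggett's version with $\sup_{r\in[0,1)}$ start. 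Concretely, define $\sigma_{m,n}:=\sup_{r\in[0,1)}(\text{passage time from } m \text{ to } n \text{ started at } r)$. By integer stationarity this family is stationary, and it satisfies $\sigma_{0,n}\le\sigma_{0,m}+\sigma_{m,n}+1$, so $\sigma_{m,n}+1$ is subadditive. Moreover, $\sigma_{0,n}\ge \tau_n\ge \sigma_{0,n}-(\text{first-step correction})$ by monotonicity of the start. Since the processes started at $0$ and at $r<1$ coalesce once they pass a point after $1$, we get $|\tau_n-\sigma_{0,n}|\le 1+\tau_1$-type terms. Kingman then gives an almost sure limit in $[0,\infty]$, which is constant because $\sigma_{m,n}$ is a function of an i.i.d.\ edge family, hence ergodic. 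Verifying this coalescence comparison, including the possibly infinite case, is the main technical step.

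For (1): if $\bbP(\X=\emptyset)=p>0$, then almost surely some edge $\{k,k+1\}$ has an empty set, and the infection never crosses it. For (2): $\sigma_{0,1}\le 1+W$, where $W$ is the waiting time from the integer time $1$, and $\bbE[\tau_{k+1}-\tau_k\mid R_k=r]\le\int_0^\infty \bbP(\X\cap[r,r+s]=\emptyset)\,ds$. By $\Z$-stationarity and $r<1$ I bound this by $M$ after averaging; more carefully, Fatou together with $\bbE\tau_n\le nM$ gives $c_\X\le M$. To get $\bbE\tau_n\le nM$ I would show that the void probability of $[r,r+s]$ is maximized appropriately, or argue via the distribution of $R_k$. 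This step is delicate, and I would first try to bound by $\int_0^\infty\sup_r\bbP(\X\cap[r,r+s]=\emptyset)\,ds$ and then compare with $M$ using stationarity. For (3): if $x\in\X$ almost surely, then so is $x+\Z$, so all edges share the deterministic points $x+\Z$. A path can cross arbitrarily many edges at the same time $t$ since $t_i\le t_{i+1}$ allows equality, giving $T(n)\le x+1$ and $c_\X=0$. Conversely, if $\bbP(x\in\X)<1$ for all $x$, I would show that the probability of $N$ consecutive edges sharing a common point in a unit window decays, forcing at least one unit of time per $K$ edges, so $c_\X\ge 1/K>0$ via the same ergodic limit.
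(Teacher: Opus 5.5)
\textbf{Existence of the limit.} This step has a genuine gap. The inequality $\sigma_{0,n}\le\sigma_{0,m}+\sigma_{m,n}+1$ fails pathwise. Started at $r\in[0,1)$, the infection reaches $m$ at some time in $[j,j+1)$, where the integer $j$ is random and determined by the edges before $m$. The rest of the passage is therefore governed by the points of the later edges in $[j,\infty)$, i.e.\ by $\sigma_{m,n}$ evaluated on the configuration shifted in time by the random amount $j$, not by $\sigma_{m,n}$ itself. For PL these time windows are independent, so the discrepancy can be of order $n-m$. Kingman's theorem cannot absorb such random shifts.

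Your coalescence comparison also fails in general. For $\X=\Z+V$ with $V$ supported in $[0,\eta]$, $\eta<1$, the trajectories started at $0$ and at $r\in(\eta,1)$ stay exactly one unit apart forever.

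The rounding idea you discarded is the one that works, provided you use Liggett's version of the subadditive ergodic theorem. It only requires $X_{0,n}\le X_{0,m}+X_{m,n}$, not subadditivity for all triples. Put $X_{0,n}:=1+\tau_n$ and, for $m\ge1$, $X_{m,n}:=1+Y_{m,n}$, where $Y_{m,n}$ is the passage time from $m$ to $n$ started at the integer time $\lceil\tau_m\rceil$.
\begin{itemize}
\item Monotonicity in the starting time gives $X_{0,n}\le X_{0,m}+X_{m,n}$.
\item $\lceil\tau_m\rceil$ is an integer measurable with respect to the edges before $m$. Hence independence of the edges and $\Z$-stationarity show that $\{X_{m,m+k}\}_{k\ge1}$ has the law of $\{X_{0,k}\}_{k\ge1}$, and that $X_{k,2k},X_{2k,3k},\dots$ are i.i.d.
\item The additive $1$ disappears after dividing by $n$.
\end{itemize}
This is exactly the ``independence over different edges'' the paper invokes when it transfers the proof of \cite[Lemma~3.2]{jahnel2026first}. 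Liggett's theorem needs $\bbE\tau_1=M<\infty$. When $M=\infty$, round each starting time down instead: this gives $\tau_n\ge\sum_{k\le n}(Z_k-1)$ with $Z_k$ i.i.d.\ copies of $\tau_1$, hence $c_\X=\infty$.

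\textbf{Item (2).} Your plan cannot be completed, because $\bbE\tau_n\le nM$ is false, and so is $c_\X\le M$ itself. Take $\X=\Z+V$ with $V$ uniform on $[0,\eta]$, $\eta<1$; this set has no atoms. The phase after edge $k$ is $V_k$, so $\tau_n=V_n+\#\{2\le k\le n\colon V_k<V_{k-1}\}$. Hence $c_\X=1/2$, whereas $M=\bbE V=\eta/2$.

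What your Markov-additive decomposition does give is $\bbE[\tau_{k+1}-\tau_k\mid R_k=r]=\int_0^\infty\bbP(\X([r,r+s])=0)\,\mathrm{d}s\le(1-r)+M$. Hence $c_\X\le M+1$: finiteness survives, the constant does not.

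The paper's own argument has the same defect. It uses $c_\X=\inf_t t^{-1}\bbE[T(t)]$, which presupposes subadditivity of $n\mapsto\bbE[T(n)]$. In the example, $\bbE[T(n)]/n=1/2-(1-\eta)/(2n)$ increases in $n$. Liggett's theorem with the variables above only yields $c_\X=\inf_n(1+\bbE\tau_n)/n\le M+1$.

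\textbf{Items (1) and (3).} Your argument for (1), Borel--Cantelli on empty edges, is fine and cleaner than the paper's. The forward direction of (3) is also fine.

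For the converse of (3), ``consecutive edges sharing a common point'' is not the relevant event, since fast crossings need no common points. You also have no control that is uniform in the random phase $R_k$. The missing ingredient, which the paper supplies, is $q:=\sup_{x\in\R}\bbP(\X\cap[x,x+\delta]\neq\emptyset)<1$ for some $\delta>0$. It follows from $\Z$-periodicity, compactness of $[0,1]$, and $\bbP(\X\cap(x-\eta,x+\eta)\neq\emptyset)\downarrow\bbP(x\in\X)$ as $\eta\downarrow0$, which holds because $\X$ is closed. Conditionally on the past, each edge then takes time more than $\delta$ with probability at least $1-q$. An exponential Chebyshev bound then gives $\bbP(\tau_n<\eps n)\to0$ for small $\eps>0$.
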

Note that in Item~$(3)$, we assume instantaneous transmissions in cases where consecutive edges have coincident meeting times. 

\medskip
Next, we state our second main result, which gives a criterion for a strict speed-up in one-dimensional FCP. 
We say that $\X$ {\em dominates} $\X'$ in terms of {\em hitting probabilities} if, for all intervals $[a,b]$,
\begin{align}\label{eq:voiddom}
\bbP(\X'([a,b])>0 )\le \bbP(\X([a,b])>0).
\end{align}
Also, recall that $\X$ has {\em no atoms} if and only if $\bbP(x\in\X)=0$ for every $x\in\R$.

\begin{theorem}[Strict speed-up on $\Z_{\ge 0}$]\label{thm:speedup}
    Let $\X,\X'$ be $\Z$-stationary random closed sets such that $\X$ dominates $\X'$ in terms of hitting probabilities. Additionally, assume that there exists an interval $[a,b]$,
    with $a\le b$, such that
\begin{equation}\label{eq:thm-condition}
        3\bbP(\X'([a,b])>0) < \bbP(\X([a,b])>0),
    \end{equation}
    as well as \eqref{eq:finite-waiting-time} for $\X$ and that $\X'$ has no atoms. Then, $c_\X<c_{\X'}$.
\end{theorem}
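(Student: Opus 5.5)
The approach is to exploit the one-dimensional structure: on $\Z_{\ge0}$ the infection times satisfy the recursion $T_{n+1}=\inf\{t\ge T_n\colon t\in\X_{\{n,n+1\}}\}$ (the only permitted paths to $n+1$ run through $n$). We write $\tau_e(t):=\inf\{s\ge t\colon s\in\X_e\}$, so that $T_{n+1}=\tau_{\{n,n+1\}}(T_n)$, and similarly for $\X'$. By Proposition~\ref{lem:Shape} both time constants exist, and $c_\X\le M<\infty$. If $c_{\X'}=\infty$ there is nothing to prove, so we assume $c_{\X'}<\infty$.

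\textbf{Step 1 (monotone coupling).} For every $t$ and $r\ge t$ we have $\bbP(\tau(t)\le r)=\bbP(\X([t,r])>0)\ge\bbP(\X'([t,r])>0)$ by \eqref{eq:voiddom}. Moreover $\tau$ is nondecreasing in $t$. Hence, edge by edge and conditionally on the past, we can couple so that $T_n\le T'_n$ implies $T_{n+1}\le T'_{n+1}$. This already gives $c_\X\le c_{\X'}$.

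For strictness we want a positive density of edges on which $T'$ loses a definite amount relative to $T$. A single gain, however, can be erased later when the two fronts resynchronize. I would therefore not couple via quantiles but compare $T'_n$ with $\X$-processes restarted at the current $\X'$-position. Using the $\Z$-stationarity, define the gain $D_n:=T'_n-T_n\ge0$. I aim to show that, with probability bounded below by $p:=\bbP(\X([a,b])>0)-3\bbP(\X'([a,b])>0)>0$, uniformly in the past, the next edge produces an increment in which $\X$ gets through a window $[a,b]+k$ while $\X'$ must wait until a later period. This would give $\liminf_n D_n/n\ge \delta p>0$ for some $\delta>0$, hence $c_{\X'}-c_\X>0$.

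\textbf{Step 2 (the gain event).} Let $k$ be the integer with $T'_n\in(k-1+b,k+b]$. Consider the event that $\X_e$ has a point in $[a,b]+k+1$ while $\X'_e$ has no point in the three windows $[a,b]+k+j$, $j\in\{0,1,2\}$. By a union bound and stationarity, a coupling of $\X_e,\X'_e$ realizing $\bbP(\X_e\cap([a,b]+k+1)\ne\emptyset,\ \X'_e\cap \bigcup_j([a,b]+k+j)=\emptyset)\ge p$ exists; this is where the factor $3$ enters. On this event $T_{n+1}\le k+1+b$, whereas $T'_{n+1}$ avoids these windows. I then need to convert "$\X'$ skips windows" into a definite delay. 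The no-atom assumption on $\X'$ is used here: the phase $T'_n \bmod 1$ has a law without atoms at the endpoints $a,b$. This lets me replace closed intervals by open ones and ensures that $\X'$ cannot be exactly at the boundary with positive probability, so the delay is at least some $\delta>0$ with positive probability. Combining this with Step 1 applied to the remaining edges, the process $\X$ started from the earlier point keeps its advantage of $\delta$ in law. Iterating via a renewal/subadditivity argument (restarting the $\X$-process at stopping times and using \eqref{eq:finite-waiting-time} for integrability) then yields $\bbE[T_n]\le\bbE[T'_n]-\delta p n+o(n)$.

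\textbf{Main obstacle.} The hardest part is the accumulation of gains in Step 2. The monotone coupling of Step 1 is not jointly compatible with the coupling needed for the gain event, and the fronts may resynchronize. I would try first to realize the whole comparison through a chain of intermediate processes: use $\X'$ on edges $1,\dots,n$, then switch to $\X$ on the gain edge, and use monotonicity of $\tau$ to show that each switch lowers the expected arrival time by at least $\delta p$. This amounts to a telescoping "one edge at a time" replacement argument, with the $\Z$-stationarity ensuring that the bound $p$ is uniform in the phase.
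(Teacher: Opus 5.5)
There is a genuine gap, and it sits exactly where you locate the ``main obstacle'': you name the difficulty but give no mechanism that makes a gain survive. Two concrete things fail.

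First, the Step~2 event does not force any delay of $\X'$. Missing the three windows $[a,b]+k+j$ does not prevent $\X'_e$ from having a point in the gaps between them, e.g.\ in $(b+k,a+k+1)$, i.e.\ \emph{before} the window $[a,b]+k+1$ in which $\X_e$ is supposed to hit. Moreover, if $T'_{n+1}\in(b+k+1,a+k+2)$ the delay can be arbitrarily small. The no-atom assumption does not give a delay bounded below uniformly in the past.

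Second, and more fundamentally, an advantage of $\delta<1$ time units is in general \emph{not} preserved by later edges. So neither ``$\X$ keeps its advantage of $\delta$ in law'' nor ``each switch in the telescoping lowers the expected arrival time by $\delta p$'' holds in general. Take $\X=\bigcup_{z\in\Z}\{z+U_z/2\}$ and let $\X'$ be an independent $q$-thinning of it with $q<1/3$. All hypotheses of the theorem hold (take $[a,b]\subset[0,1/2)$ with $a<b$). Yet from every starting time in $[1/2,1]$ the next meeting time on an edge is the same, namely the first point of that edge in $[1,\infty)$. Hence two fronts anywhere in $[1/2,1]$ are resynchronized pathwise after one more edge. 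The corresponding term of your telescoping sum (the expected difference of the arrival times at $n$ when the remaining $\X'$-edges are run from the two fronts) then vanishes. This is precisely the loss of temporal gains the paper warns about.

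What does persist is a gain of a \emph{whole} time unit, and the paper's proof is built around producing such gains. In the quantile coupling $\calT_t(n)=G_{\calT_t(n-1)}(U_n)+\calT_t(n-1)$ with shared uniforms, $\Z$-stationarity gives $G_{s-1}=G_s$, hence $\tau_{s-1}(u_1,\dots,u_k)=\tau_s(u_1,\dots,u_k)-1$. Together with monotonicity in the starting time, an advantage of $\ell\in\N_0$ units is therefore never lost.

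To create one unit, the paper uses blocks $I_z$ of $m^2$ edges and the event $A_z$ that all $U_i$, $i\in I_z$, lie in $(u',u'+\eps)$ and are decreasing. Whenever the $\X'$-front sits in $[a_o,a]+\Z$ and its next step jumps past $a+\Z$, the $\X$-front makes three steps inside $[a,b]$ while $\X'$ makes one. This uses $\tau_t(u_1,u_2)\le\tau_t(u_1+u_2)$, and it is where the factor $3$ really enters, not in a union bound over windows. The outcome is a gain of one \emph{vertex} (Claim~1). Such vertex gains are collected $m-1$ times per block. Since $\X'$ needs at least one time unit for $\underline m\le m-1$ steps with uniforms $\ge u'$, they convert into a full unit of time (Claim~2), which then accumulates linearly over the independent blocks (Claim~3).

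Your Step~1 and the overall strategy of a positive density of persistent gains are compatible with this. But without some conversion of local gains into integer (hence persistent) ones, neither your renewal iteration nor the telescoping yields $\bbE[T_n]\le\bbE[T'_n]-\delta pn+o(n)$.
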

We believe that the constant $3$ in~\eqref{eq:thm-condition} is not optimal but we are not convinced that it should in fact be equal to $1$ without additional assumptions.
Let us note that, in order to prove speed-ups in FPP, there is the advantage that any gain persists. In FCP, the situation is different as temporal gains can easily get lost. This is highlighted in the following example, where even~\eqref{eq:thm-condition} is satisfied. In particular, the no-atoms assumption cannot be dropped entirely.
\begin{example}[No speed-up on $\Z_{\ge 0}$ with atoms]
    Let $\X'$ be a thinning of $\Z$ where each site $i\in\Z$ is kept with probability $p$ and deleted otherwise, and consider 
    $$\X:=\bigcup_{i\in\X'}\{i,i+1/2\}=\X'+\{0,1/2\}.$$
    Then, $\X,\X'$ are $\Z$-stationary with 
    $$\bbP(\X([1/3,2/3])>0)=p>0=\bbP(\X'([1/3,2/3])>0).$$
    However, we have
    $$T_{\X'}(t)=T_{\X}(t) \qquad\text{ for all } t\geq0,$$
    and, in particular, $c_\X=c_{\X'}$. Roughly speaking, the additional meetings in $\X$ are useless for the transmission of information but are still detected by the stochastic order.
\end{example}

\section{Proofs}\label{sec_proofs}
\subsection{Proofs related to convex ordering}
Let us start by lifting convex ordering to finite sequences of i.i.d.\ point processes. Here and in the following, we use concave increasing functions as test functions.
\begin{lemma}[Convex ordering for i.i.d.\ copies]\label{lem:coniid}
    Let $\X\gg\X'$ and consider i.i.d.\ copies $\X_1,\X_2,\dots$ of $\X$ and  i.i.d.\ copies $\X'_1,\X'_2,\dots$ of $\X'$. Then, for every finite collection of Borel sets $B_1,\dots,B_k\in\mathcal{B}(\R^d)$ and every increasing concave function $\varphi\colon\R^{k\times k}\to\R$, we have
        $$\bbE[\varphi((\X_i(B_j))_{i,j\leq k})] \geq \bbE[\varphi((\X'_i(B_j))_{i,j\leq k})] \,.$$
\end{lemma}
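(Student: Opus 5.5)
We argue by a standard hybrid (replacement) argument, swapping one copy at a time, combined with conditioning on the remaining copies. First note that for a function $\psi$ on $\R^k$, $\psi$ is increasing concave if and only if $-\psi$ is decreasing convex. Hence the definition of $\X\gg\X'$ reads equivalently
$$\bbE[\psi(\X(B_1),\dots,\X(B_k))]\ge \bbE[\psi(\X'(B_1),\dots,\X'(B_k))]$$
for all increasing concave $\psi\colon\R^k\to\R$. This holds provided the expectations are well defined. Since $\varphi$ is concave it is bounded above by an affine function, and since it is increasing, it is bounded below on $\R_{\ge0}^{k\times k}$ by $\varphi(0)$. So we should either assume, or handle by truncation, that the first moments $\bbE[\X(B_j)]$ are finite when this is needed. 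We will treat integrability via monotone approximation if necessary and not dwell on it.

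Next we build the hybrid sequence. For $m=0,\dots,k$, let $Z^{(m)}$ be the $k\times k$ random matrix whose rows $i\le m$ are $(\X_i(B_j))_{j\le k}$ and whose rows $i>m$ are $(\X'_i(B_j))_{j\le k}$, with all copies independent. Then $Z^{(k)}$ is the left-hand configuration and $Z^{(0)}$ the right-hand one. It therefore suffices to show $\bbE[\varphi(Z^{(m)})]\ge \bbE[\varphi(Z^{(m-1)})]$ for each $m$. The matrices $Z^{(m)}$ and $Z^{(m-1)}$ differ only in row $m$, where the entries come from $\X_m$ in one case and from $\X'_m$ in the other.

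Fix $m$ and condition on all the other rows, collected in $R$, which is independent of row $m$ in both cases. For a fixed realization $r$ of $R$, consider the map $\psi_r\colon\R^k\to\R$ obtained by inserting $v\in\R^k$ as row $m$ into the matrix with the other rows given by $r$:
$$\psi_r(v)=\varphi(r;v).$$
This is a restriction of $\varphi$ to an affine slice of coordinates, so $\psi_r$ is again increasing and concave. Applying the reformulated definition gives
$$\bbE[\psi_r(\X_m(B_1),\dots,\X_m(B_k))]\ge\bbE[\psi_r(\X'_m(B_1),\dots,\X'_m(B_k))].$$
Integrating over the common law of $R$ (Fubini, using independence) yields the required inequality between $Z^{(m)}$ and $Z^{(m-1)}$. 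Telescoping over $m$ then concludes the proof.

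The main obstacle is technical rather than conceptual. One must check that the conditional expectations are well defined and that Fubini applies, i.e.\ that $\varphi(Z^{(m)})$ is quasi-integrable. I would handle this by replacing $\varphi$ with $\min(\varphi,n)$, which is still increasing and concave and is bounded above. Its lower bound $\varphi(0)$ comes from monotonicity and nonnegative counts; concretely, on $\R_{\ge0}^{k\times k}$ we use $\min(\varphi,n)\ge\min(\varphi(0),n)$. With this truncation all expectations are finite, and we pass $n\to\infty$ by monotone convergence on both sides.
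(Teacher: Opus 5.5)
Your proposal is correct and follows the paper's proof. The paper likewise notes that freezing all rows but one of an increasing concave $\varphi$ on $\R^{k\times k}$ gives an increasing concave function on $\R^k$, and then concludes ``by conditioning and independence,'' which is exactly your row-by-row hybrid argument. Your truncation to $\min(\varphi,n)$, bounded below by $\min(\varphi(0),n)$ on the nonnegative orthant, validly supplies the integrability details the paper leaves implicit.
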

\begin{proof}
Note that if $\varphi\colon\R^{k\times k}\to\R$ is increasing and concave, then $\varphi_i\colon\R^{k}\to\R$, 
$x\mapsto \varphi(a_1,\dots, a_{i-1},x,a_{i+1},\dots, a_k)$ is increasing and concave for any $i\in \{1,\dots, k\}$ and $a_1,\dots, a_{i-1},a_{i+1},\dots, a_k\in \R^k$. But then the result follows by conditioning and independence.
\end{proof}
Next, we collect some basic properties of concave functions. \begin{lemma}[Properties of concave functions]\label{lem:concave-fcts-operations}
    Let $I$ be some index set and let $\varphi_i\colon\R^k\to\R\cup\{-\infty\}$ be concave for every $i\in I$. Then,
    \begin{enumerate}
        \item $\inf_{i\in I} \varphi_i$ is concave.
        \item If $\sharp I<\infty$, then $\sum_{i\in I} \varphi_i$ is concave.
    \end{enumerate}
    The same is true if we replace concave with increasing and concave.
\end{lemma}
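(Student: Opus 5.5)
The plan is to check the two claims pointwise: fix an arbitrary $x\in\R^k$, verify the defining inequality (or monotonicity) at the relevant points, and then take infima or finite sums. I would treat (1) and (2) separately and then add the monotone versions.

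For (1), put $\psi:=\inf_{i\in I}\varphi_i$. Fix $x,y\in\R^k$ and $\lambda\in[0,1]$. For every $i\in I$, concavity of $\varphi_i$ gives
\[
\varphi_i(\lambda x+(1-\lambda)y)\ \ge\ \lambda\varphi_i(x)+(1-\lambda)\varphi_i(y)\ \ge\ \lambda\psi(x)+(1-\lambda)\psi(y).
\]
Taking the infimum over $i$ on the left yields the concavity inequality for $\psi$. The only point needing care is the extended value $-\infty$. I would use the convention $0\cdot(-\infty)=0$, so that the endpoint cases $\lambda\in\{0,1\}$ are trivial. If $\psi(x)=-\infty$ or $\psi(y)=-\infty$ and $\lambda\in(0,1)$, the right-hand side equals $-\infty$ and there is nothing to prove. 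Equivalently, one can argue via hypographs: $\psi$ is concave because its hypograph $\bigcap_i\mathrm{hypo}(\varphi_i)$ is an intersection of convex sets, hence convex. I would mention this as the cleanest formulation.

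For (2), with $I$ finite, I would sum the inequalities $\varphi_i(\lambda x+(1-\lambda)y)\ge\lambda\varphi_i(x)+(1-\lambda)\varphi_i(y)$ over $i$. Sums are well defined in $\R\cup\{-\infty\}$ because no $+\infty$ occurs. Finiteness of $I$ is what keeps the sum meaningful; for infinite $I$ the sum need not converge.

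For the monotone versions, if each $\varphi_i$ is increasing and $x\le y$ coordinatewise, then $\varphi_i(x)\le\varphi_i(y)$ for every $i$. Taking the infimum over $i$, or summing over the finitely many $i$, preserves this inequality. Hence the infimum and the finite sum are again increasing, and combined with the first two parts they are increasing and concave.

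I do not expect any real obstacle. The only delicate step is the bookkeeping with the value $-\infty$: fixing the arithmetic convention and using the hypograph viewpoint disposes of it.
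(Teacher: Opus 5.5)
Your proposal is correct and follows the paper's argument: for (1) the paper likewise applies concavity of each $\varphi_i$ at the same points and then passes to the infimum, and (2) is handled there too by summing the defining inequalities over the finitely many indices. Your handling of the value $-\infty$, the hypograph reformulation, and the explicit check of the increasing case fill in details the paper leaves implicit.
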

\begin{proof}
For Item~(i), let $x,y\in\R^k$ and $t\in(0,1)$. Then, since all the $\varphi_i$ are concave
        $$\inf_{i\in I}\varphi_i\big(tx+(1-t)y\big) \geq \inf_{i\in I}\Big( t\varphi_i(x)+ (1-t)\varphi_i(y) \Big) \geq \inf_{i\in I} t \varphi_i(x) + \inf_{i\in I} (1-t) \varphi_i(y),$$
        which shows the claim.
Finally, the statement of Item (ii) follows immediately from applying the definition of concavity.
\end{proof}

We are now in the position to prove our first main result. 
\begin{proof}[Proof of Theorem~\ref{thm:perm}]
Let us fix $x,y\in \Z^d$, $t\ge 0$, write $N=N^{\X}_t(x,y)$, $N'=N^{\X'}_t(x,y)$ and recall that $\X(B)<\infty$ if $B$ is bounded. 

Consider any spatial path $\gamma=(e_1,\dots,e_k)$ in $\Z^d$ connecting $x$ to $y$. Then, the number $N(\gamma)$ of permitted paths under $\X$ up to time $t$ following along $\gamma$ can be obtained via the monotone limit
\begin{align}\label{eq:mindar}
N(\gamma) = \lim_{n\uparrow\infty} \sum_{(B_1,\dots,B_k)\in J_k(n)}\min_{j=1,\dots,k}\X_{e_j}(B_j),
\end{align}
    where $J_k(n)$ denotes the set of increasing sequences of $k$-many intervals of sidelength $t/n$, with $n\ge k$, i.e.,
    \[J_k(n):=\big\{ (B_1,\dots, B_k)\colon B_i=[(j_i-1)t/n,j_i t/n), \, j_i\in\{1,\dots,n\}\text{ with }j_1\le j_2\le \cdots\le j_n \big\}.\]
  Indeed, in order to see~\eqref{eq:mindar}, note that, for all $n\ge k$,
   \begin{equation*}
N(\gamma) = \sum_{(B_1,\dots,B_k)\in J_k(n)}\prod_{j=1,\dots,k}\X_{e_j}(B_j)\ge \sum_{(B_1,\dots,B_k)\in J_k(n)}\min_{j=1,\dots,k}\X_{e_j}(B_j),
\end{equation*}
which provides the lower bound. On the other hand, almost surely the minimal distance between the relevant meeting times is positive, i.e.,  $0<\kappa:=\min\{\kappa_j\colon j=1,\dots,k\}$, where $\kappa_j=\min\{|s_i-s_j|\colon s_i\neq s_j\in \X_{e_j}\cap [0,t)\}$ since $\X$ has no accumulation points. Hence, for any $n\ge k$ such that $1/n<\kappa$ we have that 
\begin{equation*}
\prod_{j=1,\dots,k}\X_{e_j}(B_j)=\min_{j=1,\dots,k}\X_{e_j}(B_j),
\end{equation*}
since $\X_{e_j}(B_j)\in \{0,1\}$ for all $j$. 

The expression on the right-hand side of~\eqref{eq:mindar} is useful since the function $\N_0^k\to \N_0$, $(n_1,\dots, n_k)\mapsto \min_{j=1,\dots,k}n_j$ is a minimum over increasing linear functions and thus increasing and concave by Lemma~\ref{lem:concave-fcts-operations} Item~(i). Let $K_L$ denote the set of spatial paths $\gamma\subset [-L,L]^d$ from $x$ to $y$. Then,  
        \begin{align*}
            \bbE[N] &\ge \sum_{\gamma\in K_L} \bbE[N(\gamma)] \\
            &=\bbE\Big[\lim_{n\uparrow\infty}\sum_{\gamma\in K_L} \sum_{(B_1,\dots,B_{|\gamma|})\in J_{|\gamma|}(n)}\min_{j=1,\dots,{|\gamma|}}\X_{e_j}(B_j)\Big] \\
&= \lim_{n\uparrow\infty} \sum_{\gamma\in K_L} \sum_{(B_1,\dots,B_{|\gamma|})\in J_{|\gamma|}(n)}\bbE\Big[\min_{j=1,\dots,{|\gamma|}}\X_{e_j}(B_j)\Big]\\
           &\ge  \lim_{n\uparrow\infty} \sum_{\gamma\in K_L} \sum_{(B_1,\dots,B_{|\gamma|})\in J_{|\gamma|}(n)}\bbE\Big[\min_{j=1,\dots,{|\gamma|}}\X'_{e_j}(B_j)\Big]\\
            &= \sum_{\gamma\in K_L} \bbE[N'(\gamma)],
        \end{align*}
    where we used dominated convergence in the third and fifth line and convex ordering as in Lemma~\ref{lem:coniid} in the fourth line. Here, the dominated convergence above is justified since, for any fixed $\gamma$ we can bound
\begin{align}\label{eq:dom}
\sum_{(B_1,\dots,B_k)\in J_k(n)}\min_{j=1,\dots,k}\X_{e_j}(B_j) \leq \sum_{j=1}^k\X_{e_j}([0,t]),
\end{align}
    which has expectation bounded from above by  $|\gamma|\,\bbE[\X([0,t])]$, which is finite by assumption.
Letting $L$ tend to infinity finishes this first part of the proof.

\medskip
The proof of the second claim is similar to the one above, as we only replace the sum over paths by a minimum as follows. Since $\min\{N,1\}$ is a Bernoulli experiment, for all $L>0$, we can bound
    \begin{align*}
            \bbP(N\ge 1)& =\bbE[\min\{N,1\}]\\ 
            &\ge \bbE\Big[\min\Big\{\sum_{\gamma\in K_L} N(\gamma),1\Big\}\Big] \\
            &=\bbE\Big[\lim_{n\uparrow\infty}\min\Big\{\sum_{\gamma\in K_L} \sum_{(B_1,\dots,B_{|\gamma|})\in J_{|\gamma|}(n)}\min_{j=1,\dots,{|\gamma|}}\X_{e_j}(B_j),1\Big\}\Big] \\
            &=\lim_{n\uparrow\infty}\bbE\Big[\min\Big\{\sum_{\gamma\in K_L} \sum_{(B_1,\dots,B_{|\gamma|})\in J_{|\gamma|}(n)}\min_{j=1,\dots,{|\gamma|}}\X_{e_j}(B_j),1\Big\}\Big] \\
            &\geq \lim_{n\uparrow\infty}\bbE\Big[\min\Big\{\sum_{\gamma\in K_L} \sum_{(B_1,\dots,B_{|\gamma|})\in J_{|\gamma|}(n)}\min_{j=1,\dots,{|\gamma|}}\X'_{e_j}(B_j),1\Big\}\Big] \\
&= \bbE\Big[\min\Big\{\sum_{\gamma\in K_L} N'(\gamma),1\Big\}\Big],
        \end{align*}
    where we used dominated convergence with bound~\eqref{eq:dom} in the fourth and sixth line and Lemma~\ref{lem:concave-fcts-operations} Items~(i) and~(ii) in the fifth line. Since $L$ was arbitrary, the claim follows. 
\end{proof}

\begin{proof}[Proof of Lemma~\ref{lem:ex}] We verify the individual relations. For Items~(i) and~(ii), we observe contradicting inequalities when we use $\varphi(x)=\sqrt{x}$ and apply it to the sets $\{0\},(0,1),(-1/2,1/2)\backslash\{0\},(-1/2,0)\cup(1/2,1)$. Item~(iii) follows from~\cite[Proposition 6.1]{blaszczyszyn2011clustering} since ${\rm Pois}_1$ can be written as
    $$U+\bigcup_{z\in\Z}\bigcup_{i=1}^{N_z} \{z+U_{i,z}\},$$
    where $U,U_{z,i}$ are independent uniform random variables on $(0,1)$ and the $N_z$ are independent Poisson random variables. The claim follows from $N_z$ being more variable than the constant function $\equiv1$. 
\end{proof}

\subsection{Proofs related to strict speed-ups}
\begin{proof}[Proof of Proposition~\ref{lem:Shape}]
We refer to the proof strategy in~\cite{jahnel2026first} where a shape theorem is proved of the particular $\Z$-stationary point processes given by $\X={\rm PL}$ (with $n$-many contacts per unit interval) on $\Z^d$ and based on subadditivity arguments. More precisely, inspecting the corresponding proof of~\cite[Lemma 3.2]{jahnel2026first}, we observe that four conditions have to be checked from which only Conditions $(iii)$ and $(iv)$ require a closer look. The key observation is that the space-time independence given and used in the case $\X={\rm PL}$ can be replaced by only asking for independence over different edges since we work on $\Z_{\ge 0}$. Indeed, on $\Z_{\ge 0}$, the shortest permitted path from $ty$ to $(t+1)y$ does not use edges between $o$ and $ty$ and hence Condition $(iii)$ is satisfied. The same reasoning applies in the law-of-large-numbers-type argument that establishes Condition~$(iv)$.

Next for Item~$(1)$, if $\bbP(\X=\emptyset)=p>0$, then eventually an edge as no meeting times and the process stops. In order to see this, note that the subadditive ergodic theorem employed in establishing existence of the time constant also provides the characterization
$$
c_\X=\inf_{t>1}t^{-1}\bbE[T(t)],
$$
where, in this case, $\bbE[T(t)]\ge \ell\bbP(T(1)\ge \ell )\ge \ell p$ for all $\ell>0$ and $t\ge 1$ and hence $\bbE[T(t)]=\infty$.

For Item $(2)$, using the same characterization, 
$$
c_\X\le \bbE[T(1)]=\int_0^\infty\bbP(\X([0,s])=0)\mathrm{d} s=M,
$$
as required. 

For Item~$(3)$, it is clear that by the $\Z$-stationarity there exists arbitrarily large time $x$ with $\bbP(x\in \X)=1$ and at this time, the infection travels without delay to infinity and thus $c_\X=0$. On the other hand, it suffices to prove $c_\X>0$ if $p(x):=\bbP(x\in \X)<1$ for all $x\in [0,1]$. Note that we can even assume that $\sup\{p(x)\colon x\in [0,1]\}=:u<1$. Indeed, there exists a sequence $(x_n)_{n\ge 1}\subset[0,1]$ such that $p(x_n)$ converges to $u$. Assume that $u=1$, then by the compactness of $[0,1]$, there exists a convergent subsequence, which we also denote by $(x_n)_{n\ge 1}$, with $x_n\to x\in [0,1]$ and $p(x_n)\to p(x)=1$. 
But this contradicts $\bbP(x\in \X)<1$. 
From this we can further deduce that for $\rho:=(1-u)/2$, there exists $\delta>0$ such that $\sup\{\bbP(\X([x,x+\delta))>0)\colon x\in [0,1]\}< u+\rho<1$. Indeed, by contradiction, assume that for all $n\in\N$, there exists $x_n\in[0,1]$ such that $u+\rho\le \bbP(\X([x_n,x_n+1/n))>0)\le \bbP(\X((x_n-1/n,x_n+1/n))>0)$. Then, $(x_n)_{n\ge 1}$ has a convergent subsequence that we also call $(x_n)_{n\ge 1}$, which has a  limit point $x\in[0,1]$. But then, $\bbP(\X((x_n-1/n,x_n+1/n))>0)\to \bbP(x\in \X)\le u$, as $n$ tends to infinity, a contradiction to $\bbP(\X([x_n,x_n+1/n))>0)\ge u+\rho$.
Now, for our main claim, it suffices to prove that there exists $\varepsilon>0$ such that $\limsup_{t\uparrow\infty}\bbP(T(t)<\varepsilon t)=0$, which is equivalent to 
$$
\limsup_{t\uparrow\infty}\bbP\Big(\sum_{i=1,\dots, \lfloor t\rfloor}\tau_i<\varepsilon t\Big)=0,
$$   
where the $\tau_i$ (recursively) denote the first hitting time on the $i$-th edge after $\tau_{i-1}$. Recall that the $\tau_i$ are not independent in general. By the exponential Markov inequality, for $t\in \N$,
\begin{align*}
\bbP\Big(\sum_{i=1,\dots, t}\tau_i<\varepsilon t\Big)\le {\rm e}^{\varepsilon t}\bbE\Big[\exp\Big(-\sum_{i=1,\dots, t}\tau_i\Big)\Big]={\rm e}^{\varepsilon t}\bbE\Big[\exp\Big(-\sum_{i=1,\dots, t-1}\tau_i\Big)\bbE[{\rm e}^{-\tau_{t}}|\tau_{t-1}]\Big],  
\end{align*}
where, for all $\delta>0$,
\begin{align*}
\bbE[{\rm e}^{-\tau_{t}}|\tau_{t-1}]=\int_{0}^\infty {\rm e}^{-s}\, \bbP(\X[\tau_{t-1},\tau_{t-1}+s)>0){\rm d} s\le\delta \bbP(\X[\tau_{t-1},\tau_{t-1}+\delta)>0)+{\rm e}^{-\delta}.
\end{align*}
Hence, for $\delta$ sufficiently small, 
\begin{align*}
\bbP\Big(\sum_{i=1,\dots, t}\tau_i<\varepsilon t\Big)\le \exp\Big(\big(\varepsilon +\log(\delta(u+\rho)+{\rm e}^{-\delta})\big)t\Big),  
\end{align*}
where, for potentially even smaller $\delta$, $u':=\delta(u+\rho)+{\rm e}^{-\delta}<1$. Thus, for $\varepsilon<-\log u'$, we arrive at the desired convergence to zero.
\end{proof}
The above arguments stay intact also in the case where the process is started not at time $0$ but at some other time $t_o\in \R$.

\medskip
Before we prove our second main result, let us introduce some notation. Let $a\in\R$, $t\geq0$, and $(u_i)_{i\leq k}$ be a finite collection of $u_i\in[0,1]$ and consider
    $$F_a(t):=\bbP(\X([a,a+t]>0) \qquad\text{and}\qquad G_a(u):=F_a^{-1}(u)=\inf\{t\geq0\colon u\leq F_a(t)\}.$$
    In words,  $F_a(t)$ is the {\em hitting probability} of $[a,a+t]$ under $\X$ and $G_a$ is its generalized {\em inverse} (i.e., the quantile function).

\medskip
Next, we establish a natural coupling of FCP on $\Z_{\geq0}$ by coupling $\X$ and $\X'$ on the following probability space. Consider the recursively defined times at which the hitting probabilities are above a prescribed sequence of probabilities $(u_i)_{i\leq k}$, i.e.,
    $$\tau_t(u_1,\dots,u_k):= G_{\tau_t(u_1,\dots,u_{k-1})}(u_k)+\tau_t(u_1,\dots,u_{k-1}) \qquad\text{where }\tau_t=t.$$
Now, let $(U_i)_{i\ge 0}$ be a family of i.i.d.\ uniform random variables on $[0,1]$. Then, the time $\calT_t(n)$ when $\X$ first reaches the vertex $n\in\Z$ started at time $t$ is given by
    $$\calT_t(n):=\tau_t(U_1,\dots,U_n)=G_{\calT_t(n-1)}(U_n) + \calT_t(n-1) \quad\text{ and  }\quad \calT_t(0)=t.$$
Note that in distribution, $T(t)=\calT_{0}(\lfloor t\rfloor)$. 
For $\X'$ we denote the same quantities as $F',G',\tau'$ and $\calT'_t$. Note however that both $\calT_t$ and $\calT'_t$ use the same $(U_i)_{i\ge 0}$, this is our coupling.
    
    \medskip
The following lemma collects key properties of the coupling that will allow us later to establish the strict speed-up.

\begin{lemma}[Properties of the coupling]\label{lem:inverse}
    Let $a\in\R$, $t\geq0$, and $u,u_1,\dots,u_k\in[0,1]$. Couple $\X$ and $\X'$ as described above and assume that $\X$ dominates $\X'$ in terms of hitting probabilities as in~\eqref{eq:voiddom}. We have the following properties.
    \begin{enumerate}
        \item  $t\mapsto F_a(t) $ and $u\mapsto G_a(u)$ are increasing and right-continuous.
        \item $F'_a(t)\leq F_a(t)$ and $G_a(u)\leq G'_a(u)$.
        Further $\tau_{t}(u_1,\dots,u_k)\leq\tau'_{t}(u_1,\dots,u_k)$ and $\tau$ is increasing in all arguments $t,u_1,\dots, u_k$. In particular, $\calT_{t}(n)\leq\calT'_{t}(n)$ almost surely.
        \item Removing steps makes things faster, i.e., for each $i\in\{1,\dots,k\}$ we have that
        $$\tau_t(u_1,\dots,u_k)\geq \tau_t(u_1,\dots,u_{i-1},u_{i+1},\dots,u_k).$$
        \item Assume that $\X$ is $\Z$-stationary and that $\sup_{a\in[0,1]}\bbP(a\in\X)=u'<1$, i.e., $\X$ has no atoms of mass greater than $u'$. Then, for every $u>u'$ we have that
        $$\lim_{k\uparrow\infty} \tau_t(\underbrace{u,\dots,u}_k)=\infty.$$
        \item Let $u_1+u_2\leq1$. Then, $\tau_t(u_1,u_2)\leq\tau_t(u_1+u_2)$.
        \item Assume $\bbP(a\in \X)=0$ for all $a\in \R$. Then, $t\mapsto F_a(t)$ and $a\mapsto F_a(t)$ are continuous for all $a\in\R$ and $t\ge 0$, respectively.
    \end{enumerate}
\end{lemma}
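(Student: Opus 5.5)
The plan is to rewrite the recursion for $\tau$ as an iteration of one-step maps and to derive every item from two elementary facts. The first is continuity of probability along monotone families of sets. The second is that a closed set $\X$ meets the intersection of a decreasing family of compact intervals if and only if it meets each of them. For $u\in[0,1]$ and $s\in\R$ we set
$$h_u(s):=s+G_s(u)=\inf\{r\ge s\colon u\le \bbP(\X([s,r])>0)\},$$
so that $\tau_t(u_1,\dots,u_k)=h_{u_k}\circ\cdots\circ h_{u_1}(t)$. The maps $h'_u$ for $\X'$ are defined in the same way.

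For Item~(1), monotonicity of $F_a$ holds because $[a,a+t]$ grows with $t$. Right-continuity holds because $[a,a+t+1/n]\downarrow[a,a+t]$, so the hitting events decrease to the hitting event of the limit interval, by the compactness fact above. For $G_a$, monotonicity is immediate from the definition. For the regularity claim I would work throughout with the Galois relation
$$G_a(u)\le t \iff u\le F_a(t),$$
which follows from right-continuity of $F_a$ and is all that the later items use. I would double-check the claimed right-continuity of $G_a$ against this convention: with $\inf\{t\colon u\le F_a(t)\}$ one naturally gets left-continuity.

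For Item~(2), $F'_a\le F_a$ is exactly \eqref{eq:voiddom}. Hence $\{t\colon u\le F'_a(t)\}\subseteq\{t\colon u\le F_a(t)\}$, which gives $G_a\le G'_a$ and $h_u\le h'_u$. The key structural step is that $h_u$ is increasing in $s$. For $s\le s'$ and $r\ge s'$ we have $\bbP(\X([s',r])>0)\le\bbP(\X([s,r])>0)$, so the set defining $h_u(s')$ is contained in the set defining $h_u(s)$, whence $h_u(s)\le h_u(s')$. Monotonicity of $h_u$ in $u$ is clear. Induction on $k$ then gives monotonicity of $\tau$ in all arguments, together with
$$\tau_k=h_{u_k}(\tau_{k-1})\le h_{u_k}(\tau'_{k-1})\le h'_{u_k}(\tau'_{k-1})=\tau'_k,$$
and consequently $\calT_t(n)\le\calT'_t(n)$.

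Item~(3) follows from $h_u(s)\ge s$. Dropping step $i$ replaces the argument $h_{u_i}(\tau_{i-1})\ge\tau_{i-1}$ of the remaining maps $h_{u_{i+1}},\dots,h_{u_k}$ by the smaller value $\tau_{i-1}$, and these maps are increasing.

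For Item~(4), I would reuse the compactness argument from the proof of Proposition~\ref{lem:Shape}, Item~(3), now with threshold $u>u'$ in place of $u'+\rho$. It yields $\delta>0$ with $\sup_{a\in[0,1]}\bbP(\X([a,a+\delta])>0)<u$. By $\Z$-stationarity the same bound holds for all $a\in\R$. Hence $h_u(s)\ge s+\delta$ for every $s$, and therefore $\tau_t(u,\dots,u)\ge t+k\delta\to\infty$.

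Item~(5) is where I expect the main care to be needed. Put $s_1=h_{u_1}(t)$ and $r=h_{u_1+u_2}(t)$. By right-continuity, $\bbP(\X([t,r])>0)\ge u_1+u_2$, while for every $r'<s_1$ we have $\bbP(\X([t,r'])>0)<u_1$. Continuity from below along $[t,r']\uparrow[t,s_1)$ then gives $\bbP(\X([t,s_1))>0)\le u_1$. If $u_2>0$ this forces $r\ge s_1$, and the case $u_2=0$ is trivial. A union bound over $[t,r]=[t,s_1)\cup[s_1,r]$ gives $\bbP(\X([s_1,r])>0)\ge u_2$, hence $h_{u_2}(s_1)\le r$, which is the claim.

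For Item~(6), the jump of $F_a$ at $t$ is bounded by $\bbP(\X([a,a+t])>0)-\bbP(\X([a,a+t))>0)\le\bbP(a+t\in\X)=0$. For continuity in $a$, the symmetric difference of $[a,a+t]$ and $[a',a'+t]$ lies in small closed intervals around $a$ and $a+t$. The hitting probabilities of these intervals decrease to $\bbP(a\in\X)+\bbP(a+t\in\X)=0$ as $a'\to a$.
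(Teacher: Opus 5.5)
Your proposal is correct. For Items (2), (3), (4) and (6) it is essentially the paper's proof:
\begin{itemize}
\item the paper also shows $a+G_a(u)\le b+G_b(u)$ for $a\le b$ via the same inclusion of defining sets, and iterates;
\item it uses $G\ge 0$ for (3);
\item it uses the same compactness argument to get a uniform $\delta$ in (4);
\item it bounds the left jump of $F_a$ by $\bbP(a+t\in\X)$ in (6).
\end{itemize}
Where you deviate from the paper, your version is the more careful one, in Items (1) and (5).

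In Item (1) your suspicion is justified. With $G_a(u)=\inf\{t\ge0\colon u\le F_a(t)\}$ the quantile function is left-continuous but in general not right-continuous. For example, take $\X=\Z+V$ with $V$ uniform on $\{1/4,3/4\}$. Then $G_0(1/2)=1/4$, but $G_0(u)=3/4$ for all $u\in(1/2,1]$. So that part of the statement is false as written, and the paper's appeal to ``standard arguments'' cannot deliver it. As you note, only the relation $G_a(u)\le t\iff u\le F_a(t)$ is used later. Your compactness proof of right-continuity of $F_a$ also covers genuine random closed sets. The paper's bound by $\bbP(\X((a+t,a+t+s])>0)$ need not tend to zero if $\X$ can contain intervals.

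In Item (5) the paper intersects with $\{\X([0,\tau_0(u_1)])=0\}$ and then passes to the condition $u_1+u_2\le\bbP(\X([0,\tau_0(u_1)+s])>0)$. This step tacitly uses $F_0(\tau_0(u_1))=u_1$, and it breaks down when $F_0$ jumps at $\tau_0(u_1)$. Take $\X=\Z+\tfrac12$ with $u_1,u_2>0$: the second line of that chain equals $+\infty$, while the third equals $\tfrac12$. The final claim still holds there, but the argument does not.

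This matters because Item (5) is later applied to $\X$, which is not assumed atomless. Your decomposition $[t,r]=[t,s_1)\cup[s_1,r]$ closes the gap. It combines the union bound with $\bbP(\X([t,s_1))>0)\le u_1$, which holds by continuity from below, and so proves (5) in full generality.
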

We give the proof at the end of the section.

\begin{proof}[Proof of Theorem~\ref{thm:speedup}]
We can restrict us to the case where also 
$\X'$ satisfies~\eqref{eq:finite-waiting-time}, since otherwise there is nothing to show. First, we set
$$\ud:= \bbP(\X'([a,b])>0),$$
with $u'\in [0,1)$.
Then, by~\eqref{eq:thm-condition}, we have that $\bbP(\X([a,b])>0)=3(u'+\eps)$, where
\begin{align}\label{eq:3ud+3ueps}
3\eps=\bbP(\X([a,b])>0)-3\bbP(\X'([a,b])>0)=\bbP(\X([a,b])>0) - 3\ud>0.
\end{align}
By continuity of $F'_a(t)$, see Lemma~\ref{lem:inverse} Item~$(6)$, since $\X'$ is assumed to be atomless, and the fact that $\X'$ puts no mass on the emptyset, we find $a_o<a$ such that 
    \begin{align*}
    \bbP(\X'([a_o,b])>0)=2\ud+\eps.
    \end{align*}
    Note that by subadditivity, this implies
    \begin{align*}
        \bbP(\X'([a_o,a])>0)\geq \ud+\eps.
    \end{align*}
   Next, consider an adjusted minimal number of steps such that the hitting time is not too small,
\begin{align*}
\underline{m}&:=\underline{m}(\ud):=\min \{k\in\N\colon \tau'_0(\underbrace{\ud,\dots,\ud}_k)\geq 2\}\qquad\text{ and }\qquad
m:=2\underline{m}+1.
\end{align*}
Note that $m$ is almost-surely finite since $\X'$ has no atoms and we can use Lemma~\ref{lem:inverse} Item~$(4)$.
    Given $z\in\Z_{\geq0}$, we can now define sufficiently long intervals $I_z:=(m^2z, m^2(z+1)]\cap\Z$ and associated events
    \begin{align*}
        A_z:=\{U_i\in(\ud,\ud+\eps)\,\forall i\in I_z \text{ and } (U_i)_{i\in I_z} \text{ decreasing}\},
    \end{align*}
    which encode that the coupling variables $U_i$ are in a good interval along the entire edge interval $I_z$ and ordered. 
    Note that the probability of the event can be computed explicitly and is given by $\bbP(A_z)=\eps^{m^2}/(m^2)!>0$. 

\medskip
{\bf Claim 1:} {\em 
Let $t_o\in[a_o,a]$, consider $\ud+\eps\geq u_1\geq \dots \geq u_k \geq \ud$ for some $k\geq3$ and assume $\tau'_{t_o}(u_1)>a$. Then, 
    \begin{align}\label{eq:edgegain}
\tau_{t_o}(u_1,\dots,u_k)
            \leq \tau'_{t_o}(u_1, u_2,\dots,u_{k-1}).
    \end{align}}

In words, this shows that, under the choice of uniforms, the model based on $\X$ has arrived at least one vertex farther compared to the model based on $\X'$, without using more time.
In order to see~\eqref{eq:edgegain}, using the monotonicities exhibited in Lemma~\ref{lem:inverse} Item~$(2)$, we have that
    \begin{align*}
            \tau_{t_o}(u_1,\dots,u_k) \leq \tau_a(u_1,\dots,u_k)
            \leq \tau_a(\ud+\eps,\ud+\eps,\ud+\eps,u_4,\dots,u_k).
    \end{align*}
Using Lemma~\ref{lem:inverse} Item~$(5)$ and \eqref{eq:3ud+3ueps}, i.e., $\bbP(\X([a,b])>0)=3(u'+\varepsilon)$, we get
        \begin{align*}
\tau_a(\ud+\eps,\ud+\eps,\ud+\eps,u_4,\dots,u_k)
\leq\tau_a(3(\ud+\eps),u_4,\dots,u_k)
            = \tau_b(u_4,\dots,u_k).
    \end{align*} 
Next, again by Lemma~\ref{lem:inverse} Item~$(2)$ as well as since $\tau'_a(u')=b$ and $\tau'_{t_o}(u_1)>a$ by assumption, we have 
        \begin{align*}
\tau_b(u_4,\dots,u_k)
            &\le \tau'_b(u_4,\dots,u_k)= \tau'_a(\ud,u_4,\dots,u_{k})\leq \tau'_{t_o}(u_1,\ud,u_4,\dots,u_{k}).
    \end{align*}
    Finally, by the monotonicity, we arrive at
        \begin{align*}
            \tau'_{t_o}(u_1,\ud,u_4,\dots,u_{k})
            &\leq \tau'_{t_o}(u_1, u_2,\dots,u_{k-1}),
    \end{align*}
as desired.

\medskip
With this observation at hand, we now prove that on the events $A_z$, the process based on $\X$ gains a time advantage of at least one time unit compared to the $\X'$-based process.    

\medskip
{\bf Claim 2:} {\em 
Let $t_o\geq0$ and assume that $(U_{m^2z+1},\dots, U_{m^2(z+1)})\in A_z$ and that 
    $$\tau_{t_o}(u_1,\dots,u_{m^2z})+\ell \le \tau'_{t_o}(u_1,\dots,u_{m^2z})=:s_o,$$
for some $\ell\in \N_0$. Then,  
$$
    \tau_{t_o}(u_1,\dots,u_{m^2z}, U_{m^2z+1},\dots, U_{m^2(z+1)})+\ell+1
    \leq \tau'_{t_o}(u_1,\dots,u_{m^2z}, U_{m^2z+1},\dots, U_{m^2(z+1)}).
$$
}

Again in words, when we condition on the event that, up to the edge interval $I_z$, the $\X$-based process needs $\ell\ge 0$ time units less compared to the $\X'$-based process, then during the edge interval $I_z$ an additional time unit is saved. 
To see this, by $\Z$-stationarity, we only need to consider the case $\ell=0$. Furthermore,
    \begin{align*}
    \tau_{t_o}(u_1,\dots,u_{m^2z},U_{m^2z+1},\dots,U_{m^2(z+1)}) &= \tau_{\tau_{t_o}(u_1,\dots,u_{m^2z})}(U_{m^2z+1},\dots,U_{m^2(z+1)})\\
    &\leq \tau_{s_o}(U_{m^2z+1},\dots,U_{m^2(z+1)}),
    \end{align*}
    so it suffices to show
    \begin{align}\label{eq:4567}
    \tau_{s_o}(U_{m^2z+1},\dots,U_{m^2(z+1)}) \leq \tau'_{s_o}(U_{m^2z+1},\dots,U_{m^2(z+1)}) - 1.
    \end{align}
    Without loss of generality, we only consider the case $z=0$ and $s_o=t_o$ since this reliefs us from cumbersome index notations. For $1\le k\le m-1$, we consider special indices
    $$i_k:=\min\{i\in((k-1)m,km-1]\colon \exists z_k\in\Z \text{ s.t.}\, \calT'_{t_o}(i)\in[a_o,a]+z_k\text{ and }\calT'_{t_o}(i+1)>a+z_k\},$$
    in particular $i_{k+1}-i_k\geq2$.
    In words, $i_k$ is the first edge in the $k$-th subinterval of edges in $I_0$ such that the associated arrival time is in a $\Z$-shift of $[a_o,a]$ and the edge right after this is passed in the relevant area where the $\X$-based process is strictly faster. Note that already in each slightly shortened  subinterval $((k-1)m,(k-1)m+\underline{m}]$, the process spends at least one time unit since 
        \begin{equation}\label{eq:8989}
        \begin{split}
    \calT'_{t_o}((k-1)m+\underline{m})-\calT'_{t_o}((k-1)m)&\ge \tau'_{\calT'_{t_o}((k-1)m)}(\underbrace{\ud,\dots,\ud}_{\underline{m}})-\calT'_{t_o}((k-1)m)\\
   &\ge \tau'_{\lfloor\calT'_{t_o}((k-1)m)\rfloor}(\underbrace{\ud,\dots,\ud}_{\underline{m}})-\calT'_{t_o}((k-1)m)\\
      &\ge \tau'_{0}(\underbrace{\ud,\dots,\ud}_{\underline{m}})-1\ge 1,
      \end{split}
    \end{equation}
where we used the $\Z$-stationarity and the definition of $\underline{m}$. Note that the bound holds even for $k=m$ and this and the fact that we have a lower bound by $1$ are used later again. 
Inequality~\eqref{eq:8989} implies that, in order to avoid $[a_o,a]+\Z$, there must exist $i\in((k-1)m,(k-1)m+\underline{m}]$ and $\ell\in \N_0$ such that $\calT'_{t_o}(i)<a_o+\ell$ and $\calT'_{t_o}(i+1)>a +\ell$, which is equivalent to
$$
G'_{\calT'_{t_o}(i)}(U_{i+1})>a+\ell-\calT'_{t_o}(i).
$$
But this is equivalent to $\bbP(\X'([\calT'_{t_o}(i),a+\ell])>0)<U_{i+1}$, 
 where the right-hand side is bounded from above by $\ud+\eps$ since we work on the event $A_0$. The left-hand side however is bounded from below by  
$\bbP(\X'([a_o+\ell,a+\ell])>0)=\bbP(\X'([a_o,a])>0)\ge \ud+\eps$, where we used $\Z$-stationarity, which is a contradiction so that there exists an $i\in((k-1)m,(k-1)m+\underline{m}]$ such that $\calT'_{t_o}(i)\in[a_o,a]+\ell$.
We choose $\ell$ to be minimal, i.e., such that $a+\ell\geq\calT'_{t_o}(i)>a+\ell-1$. Then we see, with the same arguments as in~\eqref{eq:8989}, that $\calT'_{t_o}(i+\underline{m})\geq \calT'_{t_o}(i)+1 > a+\ell$. Thus, since $m=\max\{2\underline{m},4\}$ there exist $i_k\in\{i,\dots,i+\underline{m}\}\subset((k-1)m,(k-1)m+2\underline{m}]=((k-1)m,km-1]$ satisfying the properties we claimed on $i_k$ and  $i_k$ is finite for every $1\le k\le m-1$. 

In order to verify Claim~2, next, we inductively show that
\begin{align}\label{eq:1616}
\tau_{t_o}(U_{1},\dots,U_{i_k+k-1}) \leq \tau'_{t_o}(U_{1},\dots,U_{i_k}).
\end{align}
    The case of $k=1$ immediately follows from the monotone coupling. For $k+1$, we will use \eqref{eq:edgegain} to get
    \begin{equation}\label{eq:edgegain-applied}
        \tau_{\tau'_{t_o}(U_{1},\dots,U_{i_k})}(U_{i_k+1},\dots,U_{i_{k+1}+1}) \leq \tau'_{\tau'_{t_o}(U_{1},\dots,U_{i_k})}(U_{i_k+1}\dots,U_{i_{k+1}}).
    \end{equation}
    The conditions of \eqref{eq:edgegain} apply as follows: By the definition of  $i_k$, we have $\tau'_{t_o}(U_{1},\dots,U_{i_k})\in[a_0,a]+z_k$ for some $z_k\in\Z$, $\tau'_{t_o}(U_{1},\dots,U_{i_{k+1}})>a+z_k$, and monotonicity of $U_{i_k+1}\geq\dots\geq U_{i_{k+1}+1}$ due to $A_0$. These are indeed at least three terms as needed since $i_{k+1}-i_k\geq2$. Thus, 
    we can bound 
    \begin{align*}
        \tau_{t_o}(U_1,\dots,U_{i_{k+1}+k}) &= \tau_{\tau_{t_o}(U_1,\dots,U_{i_k+k-1})}(U_{i_k+k},\dots,U_{i_{k+1}+k})\\
        &\leq \tau_{\tau'_{t_o}(U_1,\dots,U_{i_k})}(U_{i_k+k},\dots,U_{i_{k+1}+k})\\
        &\leq \tau_{\tau'_{t_o}(U_{1},\dots,U_{i_k})}(U_{i_k+1},\dots,U_{i_{k+1}+1})\\
        &\leq \tau'_{\tau'_{t_o}(U_{1},\dots,U_{i_k})}(U_{i_k+1},\dots,U_{i_{k+1}})\\
        &=\tau'_{t_o}(U_1,\dots,U_{i_{k+1}}),
    \end{align*}
    where we used the induction hypothesis for the second line, the monotonicity due to $A_0$ in the third line and~\eqref{eq:edgegain-applied} in the forth line.
    Thus, using monotonicity of $U_i$ on the event $A_0$ and the definition of $m$, we can show
    \begin{align}\label{eq:6767}
    \tau_{t_o}(U_1,\dots,U_{m^2}) \leq \tau'_{t_o}(U_{1},\dots,U_{m^2-m+1}) \leq \tau'_{t_o}(U_{1},\dots,U_{m^2}) -1.
    \end{align}
    which implies~\eqref{eq:4567}.
   For the first inequality in~\eqref{eq:6767}, using~\eqref{eq:1616} and the monotonicity in the uniform random variables guaranteed by $A_0$, we have  
\begin{align*}
\tau_{t_o}(U_1,\dots,U_{m^2})&= \tau_{\tau_{t_o}(U_1,\dots,U_{i_{m-1}+m-2})}(U_{i_{m-1}+m-1},\dots,U_{m^2})\\
&\le \tau_{\tau'_{t_o}(U_1,\dots,U_{i_{m-1}})}(U_{i_{m-1}+m-1},\dots,U_{m^2})\\
&\le \tau_{\tau'_{t_o}(U_1,\dots,U_{i_{m-1}})}(U_{i_{m-1}+1},\dots,U_{m^2-m+1})\\
&\le \tau'_{t_o}(U_1,\dots,U_{m^2-m+1}).
\end{align*}
The second inequality in~\eqref{eq:6767} follows by the same arguments as~\eqref{eq:8989}, noting that $m-1\geq \underline{m}$.

\medskip
{\bf Claim 3:} {\em 
Finally, for the speed-up it suffices to exhibit $\delta>0$ such that under the coupling,
$$
\limsup_{n\uparrow\infty}\bbP(\calT'(n)-\calT(n)\le \delta n)=0,
$$
}
For this, by the stochastic domination of the hitting times, almost surely, $\calT'(n)-\calT(n)\ge 0$. Further, due to the almost-sure existence of the time constants, we may change the time scale and prove instead, 
$$
\limsup_{z\uparrow\infty}\bbP(\calT'(m^2z)-\calT(m^2z)\le \delta m^2z)=0.
$$
But, since the events $(A_z)_{z\in\N}$ are independent with the same probability $p>0$, we have that 
$$
\bbP(\calT'(m^2z)-\calT(m^2z)\le \delta m^2z)\le \bbP({\rm Bin}(p,z)<\delta m^2z),
$$
which tends to zero as $z$ tends to infinity as long as $\delta$ is sufficiently small such that $\delta m^2<p$. This proves the result.
\end{proof}

\begin{proof}[Proof of Lemma~\ref{lem:inverse}]
For Item $(1)$, note that for $s,t\ge 0$, $0\le F_a(t+s)-F_a(t)\le \bbP(\X((a+t,a+t+s])>0)$ which converges to zero as $s$ tends to zero. The right-continuity and monotonicity of the generalized inverse follows from standard arguments. 

The first statement in Item~$(2)$ is a direct consequence of the hitting-probability domination and this also implies the last statement as well as other comparisons. For the monotonicity in the arguments, let us consider $t$ first and use induction. The induction start is clear for $t_1\le t_2$. Now assume that $\tau_{t_1}(u_1,\dots, u_k)\le \tau_{t_2}(u_1,\dots, u_k)$, then, 
\begin{align*}
\tau_{t_1}(u_1,\dots, u_{k+1})= G_{\tau_{t_1}(u_1,\dots, u_k)}(u_{k+1})+\tau_{t_1}(u_1,\dots, u_k),
\end{align*}
and it suffices to show that $a+G_a(u)\le b+G_b(u)$ for $a\le b$. For this, using a change of variable and the fact that $\bbP(\X([a,b+s])>0)\ge \bbP(\X([b,b+s])>0)$,
\begin{align*}
a+G_a(u)&=a+\inf\{s\ge 0\colon u\le \bbP(\X([a,a+s])>0)\}\\
&\le a+\inf\{s\ge b-a\colon u\le \bbP(\X([a,a+s])>0)\}\\
&= a+\inf\{s\ge0\colon u\le \bbP(\X([a,b+s])>0)\}\\
&\le a+\inf\{s\ge0\colon u\le \bbP(\X([b,b+s])>0)\}\\
&\le b+G_b(u).
\end{align*} 
Using this, we can start an iterative proof by verifying that for $u_k\le v_k$ we have that
\begin{align*}
\tau_t(u_1,\dots, u_{k})&=\tau_t(u_1,\dots, u_{k-1})+G_{\tau_t(u_1,\dots, u_{k-1})}(u_k)\\
&\le \tau_t(u_1,\dots, u_{k-1})+G_{\tau_t(u_1,\dots, u_{k-1})}(v_k)\\
&=\tau_t(u_1,\dots,u_{k-1}, v_k).
\end{align*}
Next, for $u_{k-1}\le v_{k-1}$, we can use the monotonicity of $a\mapsto G_a(u)$, to see that
\begin{align*}
\tau_t(u_1,\dots, u_{k})&=\tau_t(u_1,\dots, u_{k-1})+G_{\tau_t(u_1,\dots, u_{k-1})}(u_k)\\
&\le \tau_t(u_1,\dots, v_{k-1})+G_{\tau_t(u_1,\dots, v_{k-1})}(u_k)\\
&=\tau_t(u_1,\dots,u_{k-2},v_{k-1}, u_k).
\end{align*}
Iterating this procedure gives the result. 

For Item $(3)$, fix $i \in \{1,\dots, k\}$. Then by the recursive definition of $\tau_t$ we have 
\begin{align*}
    \tau_t(u_1, \dots, u_i) = G_{\tau_t(u_1, \dots, u_{i-1})}(u_i) + \tau_t(u_1, \dots, u_{i-1}) \geq \tau_t(u_1, \dots, u_{i-1}). 
\end{align*}
Then, we observe
\begin{align*}
    \tau_t(u_1,\dots,u_k) &= \tau_{\tau_t(u_1,\dots,u_{i})}(u_{i+1},\dots,u_k)\\ &\geq \tau_{\tau_t(u_1,\dots,u_{i-1})}(u_{i+1},\dots,u_k) = \tau_t(u_1,\dots,u_{i-1},u_{i+1},\dots,u_k),
\end{align*}
where the inequality follows from monotonicity of $\tau_t$.

For Item~$(4)$, as in the proof of Item~$(2)$ of Proposition~\ref{lem:Shape} above, there exists $\delta>0$ such that $\sup\{\bbP(\X([x,x+\delta))>0)\colon x\in [0,1]\}< u'+\rho$, where $\rho:=(u-u')/2$. Hence, 
$$\tau_t(\underbrace{u,\dots,u}_k)=G_{\tau_t(\underbrace{u,\dots,u}_{k-1})}(u)+\tau_t(\underbrace{u,\dots,u}_{k-1})\ge \delta+\tau_t(\underbrace{u,\dots,u}_{k-1}),$$
since we have that 
$$
G_{\tau_t(u,\dots,u)}(u)=\inf\{t\ge 0\colon \bbP(\X([\tau_t(u,\dots,u),\tau_t(u,\dots,u)+t])>0)\ge u\}\ge \delta.
$$
Hence, $\tau_t(u,\dots,u)\ge \delta k$ and the result follows.

Regarding Item~$(5)$, we can first assume that without loss of generality $t=0$. Note that by definition 
\begin{equation*}
    \begin{split}
        \tau_0(u_1, u_2) 
        &= \tau_0(u_1) + \inf\{s\geq 0\colon u_2 \leq \mathbb{P}(\mathbb{X}([\tau_0(u_1), \tau_0(u_1) + s]) > 0) \} \\
        &\leq 
        \tau_0(u_1) + \inf\{s\geq 0\colon u_2 \leq \mathbb{P}(\mathbb{X}([\tau_0(u_1), \tau_0(u_1) + s]) > 0, \mathbb{X}([0,\tau_0(u_1)])=0) \} \\
        &\leq 
        \tau_0(u_1) + \inf\{s \geq 0\colon u_1 + u_2 \leq \mathbb{P}(\mathbb{X}([0,\tau_0(u_1)+s]) >0)\}
        \\
        &=
        \tau_0(u_1 + u_2), 
    \end{split}
\end{equation*}
where we used that $\tau_0(u_1) = \inf\{s\geq 0\colon u_1 \leq \mathbb{P}(\mathbb{X}([0,s])>0)\}$ for the last equality.

Finally, for Item~$(6)$, by Item~$(1)$, it suffices to check left-continuity of $t\mapsto F_a(t)$, but for $s\ge 0$, since $0\ge F_a(t-s)-F_a(t)\ge -\bbP(\X((a+t-s,a+t])>0)$ and the right-hand side converges to $\bbP(a+t\in \X)=0$ as $s$ tends to zero. On the other hand, for $b\ge 0$, we have that
\begin{align*}
|F_{a-b}(t)-F_a(t)|&=|\bbP(\X([a-b,a-b+t])>0)-\bbP(\X([a,a+t])>0)| \\
&\le 2\bbP(\X([a-b,a))>0)+\bbP(\X((a+t-b,a+t])>0),
\end{align*}
where the first summand on the right-hand side converges to zero as $b$ tends to zero by continuity of measures and the second summand converges to $\bbP(a+t\in \X)$, which equals zero by assumption. The other case $|F_{a+b}(t)-F_a(t)|\to 0$ as $b$ tends to zero follows by the same arguments. 
\end{proof}

\noindent\textbf{Acknowledgement.} The authors acknowledge the financial support of the Leibniz Association within the Leibniz Junior Research Group on \emph{Probabilistic Methods for Dynamic Communication Networks} as part of the Leibniz Competition as well as by Deutsche Forschungsgemeinschaft (DFG, German Research Foundation) under Germany's Excellence Strategy -- The Berlin Mathematics Research Center MATH+ (EXC-2046/1, EXC-2046/2, project ID: 390685689) through the project {\em EF45-3} on {\em Data Transmission in Dynamical Random Networks}.

\bibliographystyle{alpha}
\bibliography{references.bib}

\end{document}